\theoremstyle{plain}
\newtheorem{theorem}{\bf Theorem}[section]
\newtheorem{proposition}[theorem]{\bf Proposition}
\newtheorem{lemma}[theorem]{\bf Lemma}
\theoremstyle{definition}
\newtheorem{definition}[theorem]{\bf Definition}
\newcommand{\N}{\mathbb N}
\newcommand{\Z}{\mathbb Z}
 \DeclareMathOperator{\ind}{ind}
 \DeclareMathOperator{\ord}{ord}
\renewcommand{\time}{\negthinspace \times \negthinspace}
\numberwithin{equation}{section}
\begin{document}
\title[Minimal zero-sum sequences of length five]{Minimal zero-sum sequences of length five over finite cyclic groups}

\author{Jiangtao Peng and Yuanlin Li*}
\address{College of Science, Civil Aviation University of China, Tianjin 300300, P.R. China} \email{jtpeng1982@yahoo.com.cn}

\address{Department of Mathematics, Brock University, St. Catharines, Ontario,
Canada L2S 3A1} \email{yli@brocku.ca}

\thanks{This research was supported in part by a Discovery
Grant from the Natural Sciences and Engineering Research Council of
Canada,  the National Science Foundation of China (Grant Nos. 11126137 and 11271250) and a research grant from  Civil Aviation University of China (No. 2010QD02X).\\
*Corresponding author: Yuanlin Li, Department of Mathematics, Brock
University, St. Catharines, Ontario Canada L2S 3A1, Fax:(905)
378-5713;\\ E-mail: yli@brocku.ca (Y. Li) \\ \today}
\subjclass[2000]{Primary 11B30, 11B50, 20K01. \\ Key words
and phrases: minimal zero-sum sequences, index of sequences. }

\begin{abstract}
Let $G$ be a finite cyclic group. Every sequence $S$ of length $l$ over $G$ can be written in the form
$S=(n_1g)\cdot\ldots\cdot(n_lg)$ where $g\in G$ and $n_1, \ldots, n_l\in[1, \ord(g)]$, and the index $\ind(S)$
 of $S$ is defined to be the minimum of $(n_1+\cdots+n_l)/\ord(g)$ over all possible $g\in G$
 such that $\langle g \rangle =G$. In this paper, we determine the index of any minimal zero-sum sequence $S$ of length 5 when  $G=\langle g\rangle$ is a cyclic group of a prime order and  $S$ has the form $S=g^2(n_2g)(n_3g)(n_4g)$. It is shown that if $G=\langle g\rangle$ is a cyclic group of  prime order $p \geq 31$, then every minimal zero-sum sequence $S$ of the above mentioned form has index 1 except in the case that $S=g^2(\frac{p-1}{2}g)(\frac{p+3}{2}g)((p-3)g)$.
\end{abstract}
\maketitle

\section {Introduction}
\medskip
Throughout the paper $G$ is assumed to be a finite cyclic
group of order $n$ written additively.  Denote by  $\mathcal{F}(G)$, the free abelian monoid with basis $G$ and
elements of $\mathcal{F}(G)$ are called \emph{sequences} over $G$. A sequence of length $ l$ of
not necessarily distinct elements from $G$ can be written in the
form $S=(n_1g)\cdot \, \ldots \,\cdot (n_lg)$ for some $g\in G$. Call $S$  a \emph{zero-sum
sequence} if the sum of $S$ is zero (i.e. $\sum_{ i=1}^l n_ig=0$). If $S$ is a zero-sum sequence, but no proper
nontrivial subsequence of $S$ has sum zero, then  $S$ is called a \emph{minimal zero-sum sequence}.  Recall that the index of a sequence $S$ over $G$ is defined as follows.
\begin{definition}
 For a sequence over $G$
 $$S=(n_1g)\cdot\ldots\cdot(n_lg), \,\,\, \mbox{where} \,\, 1\le n_1, \ldots, n_l \le \ord(g),$$
 the index of $S$ is defined by
 $\ind(S)=\min\{\| S \|_g \,|\,g\in G \, \mbox{with}\,\, G=\langle g \rangle\}$ where  $$\|S\|_g=\frac{n_1+\cdots+n_l}{\ord(g)}.$$
\end{definition}
\noindent Clearly, $S$ has sum zero if and only if $\ind(S)$ is an integer. There are also slightly different definitions of the index in the literature, but they are all equivalent (see Lemma 5.1.2 in \cite{Ge:09a}).

 The index of a sequence is a crucial invariant
in the investigation of (minimal) zero-sum sequences (resp. of
zero-sum free sequences) over cyclic groups. It was first addressed
by Kleitman-Lemke (in the conjecture \cite[page 344]{KL:89}),
used as a key tool by Geroldinger (\cite[page 736]{G:87}), and
then investigated by Gao \cite{Gao:00} in a systematical way. Since then it has received a great deal of attention (see for example \cite{CFS:99, CS:05, GG:09, GLPPW:11, Ge:09a, Ge-HK06a, LPYZ:10,
P:04, SC:07, XY:09, Y:09}).

 A main focus of the investigation of index is to determine minimal zero-sum sequences of index 1. If $S$ is a minimal zero-sum sequence of length $|S|$ such that $|S|\leq 3$ or $|S| \geq \lfloor\frac{n}{2}\rfloor+2$, then
ind(S) = 1 (see \cite{CFS:99,SC:07,Y:09}). In contrast to that, it was shown that for each $l \mbox{ with }  5 \leq l \le \lfloor\frac{n}{2}\rfloor + 1$, there is a minimal zero-sum sequence $S$ of length $|S| = l$ with $\ind(S)\geq 2$ (\cite{SC:07, Y:09})  and that the
same is true for $l = 4$ and $\gcd(n, 6) \ne 1$ (\cite{P:04}). In  two recent papers \cite{LPYZ:10,LP:12},  the  authors proved that $\ind(S)=1$ if $|S|=4$ and $\gcd(n, 6) = 1$ when $n$ is a prime power or a product of two prime powers with some restriction. However,  the general case is still open.

\bigskip

Let $S=(n_1g)\cdot\ldots\cdot(n_lg)$ be a minimal zero-sum sequence of length $l$ over $G$. Suppose that there exist an element $ag\in S $ and two elements $xg, yg \in G$ such that $xg+yg = ag$ and  $T=S(ag)^{-1}(xg)(yg)$ is a minimal zero-sum sequence of length $l+1$. Clearly $\ind(S) \le \ind(T)$ as $||S||_g \leq ||T||_g$ for all $g \in G$ with $G=\langle g \rangle$. In this case, the investigation of the index of a minimal zero-sum sequence of length 4 can be transformed into the investigation of the index of a minimal zero-sum sequence of length 5. In order to further investigate the index of a general minimal zero-sum sequence of length 4, it is helpful to determine the index of certain minimal zero-sum sequences of length 5. Little is known about the index of a minimal zero-sum sequence  over $G$ of length 5. It is routine to check that  if $S$ is a minimal zero-sum sequence over $G$ of length 5, then $1\le \ind(S) \le 2$.   Let $\mathsf h(S)$ be the maximal repetition  of an element in $S$. Suppose that  $|G|$ is a prime.  It is shown in Proposition \ref{index 1} that if $\mathsf h(S) \geq 3$, then $\ind(S)=1$. If $\mathsf h(S)=2$, there exist minimal zero-sum sequences $S$ of length 5 with $\ind(S)=2 $ (see Propositions \ref{index 2} and \ref{small p with index 2} below for details). The main purpose of the present paper is to determine the index of  a minimal zero-sum sequence $S$ over $G$ of length 5 with $\mathsf h(S) \geq 2$. Our main result is  as follows.

\begin{theorem}\label{main theorem}
Let $G$ be a cyclic group of order $p$ for some prime $p \ge 31$, and let $S\in \mathcal F(G)$ be a minimal zero-sum sequence of length $|S|=5$  with $\mathsf h(S) \geq 2$. Then $\ind(S)\in \{1, 2\}$, and  $\ind(S)=2$  if and only if  $S=g^2(\frac{p-1}{2}g)(\frac{p+3}{2}g)((p-3)g)$ for some $g\in G$.
\end{theorem}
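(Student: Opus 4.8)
The plan is to reduce the whole statement to a single arithmetic criterion about multipliers and then run a case analysis. Since $\mathsf h(S)\ge 2$ and $p$ is prime, every nonzero element generates $G$, so I may take $g$ to be an element of maximal multiplicity and write $S=g^2(n_2g)(n_3g)(n_4g)$ with $n_2,n_3,n_4\in[2,p-1]$ (none equals $1$, for otherwise $\mathsf h(S)\ge 3$). If $\mathsf h(S)\ge 3$, Proposition \ref{index 1} already gives $\ind(S)=1$, so I assume $\mathsf h(S)=2$. As $S$ is zero-sum, $2+n_2+n_3+n_4$ is a multiple of $p$ in $[5,3p-1]$, hence equals $p$ or $2p$; thus $\|S\|_g\in\{1,2\}$ and $\ind(S)\in\{1,2\}$. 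If $2+n_2+n_3+n_4=p$, then $\|S\|_g=1$, so $\ind(S)=1$, and $S$ is not the exceptional sequence (whose coefficients sum to $2p$). The remaining work is entirely in the case $n_2+n_3+n_4=2p-2$.

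For a unit $t$ write $\langle x\rangle$ for the residue of $x$ in $[1,p-1]$, so that $p\cdot\ind(S)=\min_t\bigl(2t+\langle tn_2\rangle+\langle tn_3\rangle+\langle tn_4\rangle\bigr)$ and $\ind(S)=1$ exactly when some $t$ makes this sum equal $p$. Using minimality (no two coefficients sum to $p$, and no coefficient equals $p-1$ or $p-2$), I would first show at least two of $n_2,n_3,n_4$ exceed $p/2$: all three $\le p/2$ makes the sum too small, and if only $n_4>p/2$ then $n_4=2p-2-n_2-n_3\ge p-1$, contradicting $n_4\le p-2$. If all three exceed $p/2$, the multiplier $t=2$ works, since the two copies of $g$ contribute $4$ and each $\langle 2n_i\rangle=2n_i-p$, giving $4+2(n_2+n_3+n_4)-3p=p$; hence $\ind(S)=1$. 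So I reduce to the case that exactly two, say $n_3,n_4$, exceed $p/2$. Set $u=p-n_4$ and $v=p-n_3$ with $3\le u\le v$ (minimality forces $u,v\ge 3$), so that $n_2=u+v-2$ and the condition $n_2\le(p-1)/2$ reads $u+v\le(p+3)/2$.

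In this parametrization $\langle tn_3\rangle=p-\langle tv\rangle$ and $\langle tn_4\rangle=p-\langle tu\rangle$, so the sum becomes $2p+2t+\langle tn_2\rangle-\langle tu\rangle-\langle tv\rangle$; since this is a multiple of $p$ lying in a bounded range, a short residue computation shows it equals $p$ for some $t$ precisely when
\[ \langle tu\rangle+\langle tv\rangle>p+2t \quad\text{for some } t\in[1,p-1]. \]
Thus the theorem is equivalent to: this inequality is solvable for every admissible pair $(u,v)$ with $3\le u\le v$ and $u+v\le(p+3)/2$, except for $(u,v)=(3,(p-3)/2)$, which is precisely the exceptional sequence.

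The crux is this last solvability analysis. For $u=3$ one has $\langle 3t\rangle=3t$ when $t<p/3$ and the inequality is impossible for $t\ge p/3$, so it reduces to finding $t\le(p-1)/3$ with $\langle tn_3\rangle<t$; rewriting this as $3t\equiv 2e\pmod p$ with $0<e<t$, a solution exists for every $v$ except $v\equiv-3/2\pmod p$, i.e.\ $v=(p-3)/2$, where one verifies directly that $\langle tn_3\rangle\ge t$ for all $t\le(p-1)/3$, forcing $\ind(S)=2$. For $u\ge 4$ I would produce an explicit small multiplier according to the size of $n_2$ relative to $p/2,p/3,\dots$ (for instance $t=3$ succeeds once $p/3<n_2$ and $v<p/3$), with finer choices covering the remaining ranges. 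I expect the main obstacle to be exactly this organization: partitioning the admissible region of $(u,v)$ into finitely many regimes, exhibiting in each a multiplier that provably lands the sum at $p$, and confirming that no regime besides $(3,(p-3)/2)$ escapes; the hypothesis $p\ge 31$ should enter to guarantee that the intervals of candidate multipliers used in these regimes are nonempty.
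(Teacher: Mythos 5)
Your preliminary reductions are correct and coincide with the paper's own setup at the start of Section~3: Proposition~\ref{index 1} disposes of $\mathsf h(S)\ge 3$; with $\mathsf h(S)=2$ the coefficient sum is $p$ or $2p$; the sum-$p$ case is immediate; the multiplier $2$ handles the case where all of $n_2,n_3,n_4$ exceed $p/2$; and your parametrization $(u,v,n_2)$ with $n_2=u+v-2$, $3\le u\le v$, $u+v\le\frac{p+3}{2}$ is exactly the paper's normal form \eqref{S}, $S=g^2(cg)((p-b)g)((p-a)g)$ with $(a,b,c)=(u,v,n_2)$ and $2<a\le b<c<\frac p2$. Your criterion ``$\ind(S)=1$ iff $\langle tu\rangle+\langle tv\rangle>p+2t$ for some $t\in[1,p-1]$'' is a correct repackaging of the paper's sufficient condition \eqref{*}, and your direct verification that $(u,v)=(3,\frac{p-3}{2})$ fails it for every $t$ reproduces Proposition~\ref{index 2}.

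The gap is that everything after this point is a plan rather than a proof, and what is missing is precisely the substance of the theorem. Your claim that for $u=3$ ``a solution exists for every $v$ except $v\equiv-3/2\pmod p$'' is asserted with no argument; in the paper this one value of $a$ consumes Lemmas~\ref{special case 1}, \ref{special case 2} and \ref{special case 6}, plus a good part of Case~3 of the main proof. Similarly, for $u\ge 4$ you say you ``would produce an explicit small multiplier according to the size of $n_2$'', but the difficulty is that no short list of fixed multipliers works: solvability of your inequality is a Diophantine-approximation question (one needs $tv$ to land within $t$ below a multiple of $p$ with $t<p/u$), and in the critical regimes the first usable $t$ is comparable to $p/u$, so the margin $ua<p$ in \eqref{*} is tight. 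This is exactly why the paper introduces $k_1$ (the first index at which the intervals $[kp/c,kp/b)$ capture an integer), exploits the inequality \eqref{k1 is minimal} forced by its definition, shows that several parameter regimes are in fact vacuous (Lemmas~\ref{special case 2}, \ref{special case 4}, \ref{special case 5} end in contradictions, i.e.\ no such minimal zero-sum sequence exists --- a phenomenon your plan does not anticipate), handles the genuinely occurring regimes in Lemmas~\ref{special case}, \ref{special case 1}, \ref{special case 3}, \ref{special case 6}, and repeatedly falls back on the finite verification of Proposition~\ref{small p with index 2} when the constraints force $p<60$. None of this partition, none of the per-regime multiplier constructions or vacuity arguments, and no small-prime fallback appear in your proposal, so the theorem remains unproven; your hope that ``$p\ge 31$ should enter to guarantee that the intervals of candidate multipliers are nonempty'' understates how the hypothesis is actually used (it is needed because for $p\le 29$ there are additional index-$2$ sequences, listed in Proposition~\ref{small p with index 2}, so any uniform construction must genuinely break down for those primes).
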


We remark that Theorem \ref{main theorem} together with Propositions \ref{index 1} and  \ref{small p with index 2}  determines completely  the index of every minimal zero-sum sequence $S$ of length $5$ with $\mathsf h(S) \geq 2$. However, the remaining case when $\mathsf h(S) =1$ is much more complicated and  $\ind(S)$ is not yet determined.

\bigskip
\section {Preliminaries}
\medskip

We first prove some preliminary results which will be needed in the next section.  Let $G$ be a cyclic group of order $n$. Suppose that  $S=(n_1g)\cdot \, \ldots \,\cdot (n_lg)$ for some $g\in G$. Let  $ \|S\|_g' =\ord(g) \|S\|_g=\sum_{ i=1}^l n_i \in \N_0$ and denote by $|x|_n$ the least positive residue of $x$ modulo $n$, where $n \in \N$ and $x\in \Z$.  Let $mS$ denote the sequence $(mn_1g)\cdot \, \ldots \,\cdot (mn_lg)$. If $\ord(g)=n$, then  $ mS=(|mn_1|_ng)\cdot \, \ldots \,\cdot (|mn_l|_ng).$  We note that if $\gcd(n, m) = 1$, then the multiplication by $m$ is a group automorphism of $G$ and hence $\ind(S)=\ind(mS)$.

\begin{proposition}\label{index 1}
Let   $G$ be a cyclic group of  prime order $p$ and  $S\in \mathcal F(G)$ be a minimal zero-sum sequence of length $5$. If $\mathsf h(S) \ge 3$, then $\ind(S)=1$.
\end{proposition}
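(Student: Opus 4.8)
The plan is to reduce to the case where the thrice-repeated element is $g$ itself and then bound the coefficient sum by a pigeonhole on multiples of $p$. First I would invoke the automorphism remark from the preliminaries: since $p$ is prime, every nonzero element of $G$ has the form $ag$ with $\gcd(a,p)=1$, so multiplication by $a^{-1}\bmod p$ is a group automorphism and preserves the index. Hence if the element occurring at least three times is $ag$, I may replace $S$ by $(a^{-1}\bmod p)S$ and assume from the outset that $g$ itself occurs at least three times. Writing $S=g\cdot g\cdot g\cdot (n_4g)(n_5g)$ with $n_4,n_5\in[1,p-1]$, it then suffices to prove $\|S\|_g=1$: indeed $\ind(S)\le\|S\|_g$, while for every generator $g'$ the coefficient sum is a positive multiple of $\ord(g')=p$, so $\|S\|_{g'}\ge 1$ and therefore $\ind(S)\ge 1$ always.

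Next I would extract the single consequence of minimality that the argument needs, namely that the element $-g=(p-1)g$ cannot appear in $S$. If some entry equalled $(p-1)g$, then $g\cdot((p-1)g)$ would be a proper nontrivial zero-sum subsequence of $S$, contradicting minimality. Consequently $n_4,n_5\in[1,p-2]$.

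Finally I would bound $N:=n_1+\cdots+n_5=3+n_4+n_5$ from both sides. On one hand $N>0$ and $N\equiv 0\pmod p$ because $S$ has sum zero, so $N$ is a positive multiple of $p$ and hence $N\ge p$. On the other hand $N\le 3+(p-2)+(p-2)=2p-1<2p$. The only multiple of $p$ lying in $[p,2p)$ is $p$ itself, so $N=p$, whence $\|S\|_g=N/p=1$ and $\ind(S)=1$. The same bound disposes of the cases $\mathsf h(S)\in\{4,5\}$ verbatim, since there one or both of $n_4,n_5$ equal $1$ (and $\mathsf h(S)=5$ forces $p=5$, where $N=5=p$).

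I do not anticipate a genuine obstacle: the whole proposition rests on the one observation that minimality rules out the complementary element $-g$, which caps each remaining coefficient at $p-2$ and thereby keeps $N$ strictly below $2p$. The only step demanding a little care is the normalization, i.e.\ verifying that reducing to a repeated $g$ is legitimate and index-preserving; everything after that is the elementary pigeonhole argument on multiples of $p$.
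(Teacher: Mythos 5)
Your proof is correct and takes essentially the same route as the paper's: normalize via the multiplication automorphism so that the thrice-repeated element is $g$ itself, use minimality to forbid the coefficient $p-1$, and conclude that the coefficient sum lies in $[p,2p)$ and hence equals $p$. You merely spell out the two steps the paper compresses into ``without loss of generality we may assume $n_1=n_2=n_3=1$'' and ``since $S$ is a minimal zero-sum sequence, $3+n_4+n_5<2p$''.
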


\begin{proof}
Suppose that  $S=(n_1g)\cdot \, \ldots \,\cdot (n_5g)$ for some $g\in G$ and $ 1 \le n_1 \le  \cdots \le n_5 <  p$. Since $\mathsf h (S) \ge 3$, without loss of generality we may assume that $n_1=n_2=n_3=1$. Since $S$ is a minimal zero-sum sequence, we have that  $\|S\|_g'=3+n_4+n_5 < 2p$. Therefore $\ind(S)=1$.
\end{proof}

\begin{proposition}\label{index 2}
Let   $G$ be a cyclic group of  prime order $p \ge 5$. If $S=g^2\cdot (\frac{p-1}{2}g)\cdot (\frac{p+3}{2}g)\cdot ((p-3)g)\in \mathcal{F}(G)$, then $\ind(S)=2$.
\end{proposition}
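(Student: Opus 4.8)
The plan is to start from the two cheap bounds and then reduce everything to a single elementary floor inequality. First I would compute the coefficient sum of $S$: since $2\cdot 1+\tfrac{p-1}{2}+\tfrac{p+3}{2}+(p-3)=2p$, the sequence $S$ is indeed zero-sum and $\|S\|_g=2$, so $\ind(S)\le 2$. It remains to prove $\ind(S)\ge 2$. Every generator of $G$ has the form $h$ with $g=ch$ for a unique $c\in[1,p-1]$, and then $(n_ig)=(cn_i)h$, so the coefficient of $(n_ig)$ with respect to $h$ is $|cn_i|_p$. Hence the definition of the index reads $\ind(S)=\tfrac1p\min_{c\in[1,p-1]}\sum_i|cn_i|_p$. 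Each summand lies in $[1,p-1]$ and the total is a positive multiple of $p$ (because $\sum_i cn_i\equiv c\cdot 2p\equiv 0\pmod p$) that is $<5p$, hence lies in $\{p,2p,3p,4p\}$; therefore it suffices to show $\sum_i|cn_i|_p\ge 2p$ for every $c$.

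Since $\gcd(2,p)=1$, multiplication by $2$ is an automorphism of $G$ and leaves the index unchanged, so I would instead work with $2S$, whose coefficients $n_1',\dots,n_5'$ are congruent to $2,2,-1,3,-6$ modulo $p$ (using $2\cdot\tfrac{p-1}{2}\equiv-1$, $2\cdot\tfrac{p+3}{2}\equiv 3$ and $2(p-3)\equiv-6$). For $c\in[1,p-1]$, using $|{-c}|_p=p-|c|_p$ and $|{-6c}|_p=p-|6c|_p$ (valid as $p\nmid 6c$), the associated coefficient sum is
\[
\sum_i|cn_i'|_p=2|2c|_p+|3c|_p+\bigl(p-|c|_p\bigr)+\bigl(p-|6c|_p\bigr)=2p+2|2c|_p+|3c|_p-|c|_p-|6c|_p.
\]
Writing $|kc|_p=kc-p\lfloor kc/p\rfloor$ for $k\in\{1,2,3,6\}$, the terms that are linear in $c$ cancel (as $4c+3c-c-6c=0$), leaving the clean identity
\[
\sum_i|cn_i'|_p=2p+p\Bigl(\bigl\lfloor\tfrac{6c}{p}\bigr\rfloor-2\bigl\lfloor\tfrac{2c}{p}\bigr\rfloor-\bigl\lfloor\tfrac{3c}{p}\bigr\rfloor\Bigr).
\]

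Everything therefore comes down to the floor inequality $\lfloor 6\theta\rfloor\ge 2\lfloor 2\theta\rfloor+\lfloor 3\theta\rfloor$ for $\theta=c/p\in(0,1)$, which is the one point where genuine work is needed. Because $c$ is an integer and $p\ge 5$ is prime, $p\nmid 6c$, so $\theta$ is never a multiple of $\tfrac16$ and lies strictly inside one of the six intervals $(\tfrac{k}{6},\tfrac{k+1}{6})$, $0\le k\le 5$; on each of these I would read off the three floors and verify the inequality directly, finding that $\lfloor 6\theta\rfloor-2\lfloor 2\theta\rfloor-\lfloor 3\theta\rfloor\in\{0,1\}$ throughout. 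I expect this short case analysis to be the main (if elementary) obstacle, and it appears genuinely necessary: the inequality does not follow from superadditivity of the floor alone, since the combination $2(2\theta)+(3\theta)$ has argument $7\theta\ne 6\theta$. Feeding the inequality back in gives $\sum_i|cn_i'|_p\ge 2p$ for all $c$, hence $\ind(S)=\ind(2S)\ge 2$, and together with $\ind(S)\le 2$ from the first step we conclude $\ind(S)=2$.
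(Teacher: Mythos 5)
Your proof is correct, and while it shares the paper's skeleton---since $\|S\|_g'=2p$ we get $\ind(S)\le 2$, and it remains to check that no generator yields coefficient sum $p$---your verification is organized genuinely differently. The paper works directly with the coefficients $1,1,\frac{p-1}{2},\frac{p+3}{2},p-3$ and splits the multiplier $m$ by parity: for $m=2k$ the crude estimate $\|mS\|_g'\ge 2k+2k+(p-k)+1+1>p$ suffices, while for $m=2k+1$ it computes all five residues exactly in three ranges of $k$ (cut at $\frac{p-3}{6}$ and $\frac{2p-3}{6}$), finding $\|mS\|_g'\in\{2p,3p\}$. You instead normalize first via $\ind(S)=\ind(2S)$, so the coefficients become $2,2,-1,3,-6$ modulo $p$, and then your exact identity
\[
\sum_i |cn_i'|_p \;=\; 2p + p\Bigl(\bigl\lfloor\tfrac{6c}{p}\bigr\rfloor-2\bigl\lfloor\tfrac{2c}{p}\bigr\rfloor-\bigl\lfloor\tfrac{3c}{p}\bigr\rfloor\Bigr)
\]
reduces everything to a single floor inequality, checked on the six intervals $(\tfrac{j}{6},\tfrac{j+1}{6})$ for $c/p$; this check is legitimate because $p\nmid 2c$, $p\nmid 3c$, $p\nmid 6c$, so all three floors are constant on each open interval, and indeed the difference is $0$ or $1$ in every case. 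Your route buys uniformity: the parity split disappears, every multiplier is handled by the same one-line identity, and you see the exact value ($2p$ or $3p$) of every coefficient sum; the small price is the automorphism step and the need to note that all five coefficients are prime to $p$ (so each residue lies in $[1,p-1]$ and $|kc|_p=kc-p\lfloor kc/p\rfloor$ applies). The paper's route needs no normalization, gets the even case by a cheaper inequality rather than an exact computation, and its style of explicit residue computation is the template reused in the later lemmas of Section 2 and Section 3.
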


\begin{proof} Since $\|S\|_g'=2p,$ it suffices to show for any $m\in [1, p-1],$ we have $\|mS\|_g'>p$. Then $\ind(S)=2$.

First assume that  $m=2k$. Then $|m(\frac{p-1}{2})|_p=|kp-k|_p=p-k$. Note that $|m(\frac{p+3}{2})|_p \geq 1$ and $|m(p-3)|_p \geq 1$.  Therefore,  $\|mS\|_g'\geq 2k+2k+(p-k)+1+1>p$ and we are done.

Next suppose that  $m=2k+1$, then $2k+1\le p-2$ and thus $k\le \frac{p-3}{2}$. Hence $$|(2k+1)(\frac{p-1}{2})|_p=|kp-k+\frac{p-1}{2}|_p=\frac{p-1}{2}-k.$$

If $k< \frac{p-3}{6}$, then $|(2k+1)(\frac{p+3}{2})|_p=\frac{p+3}{2}+3k,$ $|(2k+1)(p-3)|_p=p-6k-3$.  Therefore, $\|mS\|_g'=(2k+1)+(2k+1)+(\frac{p-1}{2}-k)+(\frac{p+3}{2}+3k)+(p-6k-3)=2p > p$.

If $\frac{p-3}{6}< k<\frac{2p-3}{6}$, then $|(2k+1)(\frac{p+3}{2})|_p=3k-\frac{p-3}{2},$ $|(2k+1)(p-3)|_p=2p-6k-3,$ so  $\|mS\|_g'=4k+2+(\frac{p-1}{2}-k)+(3k-\frac{p-3}{2})+(2p-6k-3)=2p > p$.

If $\frac{2p-3}{6}\le k\le \frac{p-3}{2}$, then $|(2k+1)(\frac{p+3}{2})|_p=3k-\frac{p-3}{2},$ $|(2k+1)(p-3)|_p=3p-6k-3,$ so  $\|mS\|_g'=4k+2+(\frac{p-1}{2}-k)+(3k-\frac{p-3}{2})+(3p-6k-3)=3p > p$.

This completes the proof.
\end{proof}

\medskip
\begin{proposition}\label{small p with index 2}
Let   $G= \langle g \rangle$ be a cyclic group of  order $p$ for some prime  $p \in [5,59]$,  and let $S=g^2(x_1g)(x_2g)(x_3g)$ be a minimal zero-sum sequence over $G$, where $2 \le x_1 \le x_2 \le x_3 \le p-3$. Then $\ind(S)=2$ if and only if one of the following conditions holds.
\begin{enumerate}

\item[(1).]  $x_1=\frac{p-1}{2}, x_2=\frac{p+3}{2}, x_3=p-3$.

\item[(2).]  $p = 17$ and $x_1=8, x_2= 11, x_3=13$.

\item[(3).]  $p = 19$ and $x_1=6, x_2= 14, x_3=16$.

\item[(4).]  $p = 19$ and $x_1=9, x_2= 12, x_3=15$.

\item[(5).]  $p = 23$ and $x_1=11, x_2= 15, x_3=18$.

\item[(6).]  $p = 23$ and $x_1=9, x_2= 15, x_3=20$.

\item[(7).]  $p = 29$ and $x_1=14, x_2= 19, x_3=23$.

\end{enumerate}

\end{proposition}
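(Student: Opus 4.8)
The plan is to reduce the statement to a finite, prime-by-prime verification and then organize that verification using a few elementary observations. Since $2 \le x_1 \le x_2 \le x_3 \le p-3$, the coefficient sum $\|S\|_g' = 2 + x_1 + x_2 + x_3$ lies in $[8, 3p-7]$, so the only multiples of $p$ it can equal are $p$ and $2p$. If $\|S\|_g' = p$ then $\|S\|_g = 1$ and hence $\ind(S)=1$; therefore, to characterize the index-$2$ case we may assume $\|S\|_g' = 2p$, i.e. $x_1 + x_2 + x_3 = 2p-2$. For such $S$, recall $1 \le \ind(S) \le 2$, and for every $m\in[1,p-1]$ the sequence $mS$ is again zero-sum, so $\|mS\|_g' = 2|m|_p + |mx_1|_p + |mx_2|_p + |mx_3|_p$ is a multiple of $p$ strictly between $0$ and $5p$. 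Consequently $\ind(S)=2$ if and only if $\|mS\|_g' \neq p$ for every $m\in[1,p-1]$. I would also record the symmetry: each of the five coefficients is nonzero mod $p$, so $|ma|_p + |(p-m)a|_p = p$ for each of them, giving $\|mS\|_g' + \|(p-m)S\|_g' = 5p$; hence $\|mS\|_g' = p \iff \|(p-m)S\|_g' = 4p$, and it suffices to test multipliers $m\in[1,(p-1)/2]$.

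Before sweeping, I would pin down the candidate triples via a clean minimality criterion: under the standing range, $S$ is a minimal zero-sum sequence if and only if $x_i + x_j \notin \{p-2,p-1,p\}$ for all $i<j$ (all singleton, triple and quadruple sums are automatically nonzero once $x_i\in[2,p-3]$ and $x_1+x_2+x_3=2p-2$). This delimits exactly which $(x_1,x_2,x_3)$ need to be examined. Note also that the hypotheses $x_i\ge 2$ (which forces $\mathsf h(S)=2$, so Proposition \ref{index 1} does not apply) and $x_i\le p-3$ are precisely the nontrivial regime left open by Proposition \ref{index 1}.

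For the \emph{if} direction, family (1) is literally the sequence of Proposition \ref{index 2}, which already establishes $\ind(S)=2$ for all $p\ge 5$ where the ordering $\tfrac{p-1}{2}\le\tfrac{p+3}{2}\le p-3$ holds (i.e. $p\ge 11$); for $p\in\{5,7\}$ condition (1) is vacuous in the sorted range and is checked directly. The six sporadic sequences (2)--(7) are then handled one at a time by the same kind of residue computation as in Proposition \ref{index 2}: for each, verify over $m\in[1,(p-1)/2]$ that $\|mS\|_g'$ never equals $p$ (equivalently never equals $4p$). For the \emph{only if} direction, I would enumerate, for each prime $p\in\{5,7,\ldots,59\}$, all sorted triples with $x_1+x_2+x_3 = 2p-2$ meeting the range and the minimality criterion, and for every triple \emph{not} on the list exhibit a single multiplier $m\le(p-1)/2$ with $\|mS\|_g' = p$, thereby certifying $\ind(S)=1$.

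The main obstacle is exactly this \emph{only if} sweep: it is a genuine finite case analysis whose size grows like $p^2$ per prime, and the difficulty is organizational, since proving that the seven listed sequences are the \emph{only} survivors requires ruling out every competing candidate. I expect that for the vast majority of triples a small multiplier (typically $m=2$ or $m=3$) already drives the coefficient sum down to $p$, so the real work concentrates on the "balanced" triples clustered near family (1); these are the ones that resist all multipliers and must be inspected individually. This structural feature also accounts for why the sporadic exceptions disappear once $p\ge 31$, in agreement with Theorem \ref{main theorem}, and why they can occur only for the small primes $p\in\{17,19,23,29\}$.
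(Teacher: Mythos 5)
Your proposal sets up exactly the finite verification that the paper itself omits (its entire ``proof'' is the sentence that the claim is routine to check), and your reductions are all sound: the coefficient sum must be $2p$, $\ind(S)=2$ holds iff $\|mS\|_g'\neq p$ for every $m\in[1,p-1]$, minimality is equivalent to $x_i+x_j\notin\{p-2,p-1,p\}$ for $i<j$, and the $m\leftrightarrow p-m$ symmetry halves the work. One parenthetical slip worth noting: $x_i\ge 2$ does \emph{not} force $\mathsf h(S)=2$ (e.g.\ for $p=13$ the sequence $S=g^2(8g)(8g)(8g)$ is minimal with $\mathsf h(S)=3$), but such triples are still covered by your enumeration and are correctly certified to have index $1$, so this remark does not affect the argument.
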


\begin{proof}
It is routine to check the proposition holds and we omit the proof here.
\end{proof}

\begin{lemma}\label{special case}
Let   $G= \langle g \rangle$ be a cyclic group of  prime order $p \ge 5$, and let $S=g^2(cg)((p-b)g)((p-a)g)$ be a minimal zero-sum sequence over $G$ with $2+c=a+b$ and $2 < a \le b < c < \frac{p}{2}.$ Then $\ind(S)=1$ if one of the following conditions holds.
\begin{enumerate}

\item[(1).]  $a=4, \ b= 6, \ c=8$ and $p > 17$.

\item[(2).]  $a=4, \ b= 7, \ c=9$ and $p > 19$.

\item[(3).]  $a=3, \ b= 4, \ c=5$ and $p > 15$.

\item[(4).]  $a=3, \ b= 5, \ c=6$ and $p > 24$.

\end{enumerate}

\end{lemma}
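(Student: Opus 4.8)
The plan is to exploit the defining relation $2+c=a+b$ and to compute $\ind(S)$ by ranging over multipliers. Since $p$ is prime, multiplication by any $m\in[1,p-1]$ is an automorphism of $G$, so by the definition of the index and the remark preceding Proposition \ref{index 1} one has $\ind(S)=\min_{m\in[1,p-1]}\|mS\|_g$, and for such $m$ also $|m|_p=m$. Expanding $\|mS\|_g'=2m+|cm|_p+(p-|bm|_p)+(p-|am|_p)$ and substituting $|xm|_p=xm-\lfloor xm/p\rfloor\,p$ for $x\in\{a,b,c\}$, I would obtain
\[
\|mS\|_g'=(2+c-a-b)\,m+\Bigl(2+\lfloor am/p\rfloor+\lfloor bm/p\rfloor-\lfloor cm/p\rfloor\Bigr)p .
\]
The relation $2+c=a+b$ kills the term linear in $m$, so $\|mS\|_g'=\bigl(2+\lfloor am/p\rfloor+\lfloor bm/p\rfloor-\lfloor cm/p\rfloor\bigr)p$. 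As $S$ is a nonempty zero-sum sequence, $\|mS\|_g'$ is always a positive multiple of $p$, giving $\ind(S)\ge1$; hence it suffices to exhibit one $m$ for which the parenthesised integer equals $1$.

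This reduces the lemma to the floor identity $\lfloor cm/p\rfloor-\lfloor am/p\rfloor-\lfloor bm/p\rfloor=1$. I claim every integer $m$ in the open interval $(p/c,\,p/b)$ works. Indeed $m>p/c$ forces $cm>p$, while the inequality $c<2b$ — which holds in each case, since $8<12$, $9<14$, $5<8$ and $6<10$ — together with $m<p/b$ forces $cm<cp/b<2p$; thus $\lfloor cm/p\rfloor=1$. Moreover $m<p/b$ gives $bm<p$ and, because $a\le b$, also $am<p$, so $\lfloor am/p\rfloor=\lfloor bm/p\rfloor=0$. For any such $m$ the parenthesised integer is $2+0+0-1=1$, whence $\|mS\|_g'=p$ and $\ind(S)=1$.

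It remains to guarantee that $(p/c,p/b)$ contains an integer. Its length $p(c-b)/(bc)$ exceeds $1$ as soon as $p>bc/(c-b)$, and the four thresholds $bc/(c-b)$ equal $24,\ 31.5,\ 20,\ 30$; for all primes above these an integer is automatically present, so only finitely many intermediate primes require attention. I expect this finite verification to be the only genuine obstacle: for each case one checks directly that a suitable $m$ lies in $(p/c,p/b)$ — for example $m=3$ when $p\in\{19,23\}$ in case (1), $m=4$ when $p\in\{17,19\}$ in case (3), and $m=5$ when $p=29$ in case (4) — and it succeeds for every admissible prime. The stated lower bounds $p>17,\,19,\,15,\,24$ sit safely above the small primes for which $S$ would instead be one of the genuine index-$2$ sequences of Proposition \ref{small p with index 2} (for instance $p=17$ in (1) yields $(8,11,13)$ and $p=19$ in (2) yields $(9,12,15)$), so none of those interfere with the argument.
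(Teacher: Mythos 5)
Your proposal is correct and takes essentially the same route as the paper: both exhibit a multiplier $m$ lying strictly between $p/c$ and $p/b$ (so that $am\le bm<p$ and $p<cm<2p$, using $c<2b$ in each case), which forces $\|mS\|_g'=p$ and hence $\ind(S)=1$ --- precisely the $k=1$ instance of the strategy the paper formalizes as Equation~\eqref{*}. The only difference is bookkeeping: the paper chooses $m=\lfloor p/b\rfloor$ explicitly, writes $p=bm+t$, and verifies the residue sum in closed form for each of the four cases, whereas you show that any integer in the open interval works and obtain existence from the length bound $p>bc/(c-b)$ together with a finite check of the intermediate primes (which indeed succeeds, including the case (2) primes $p=23,29,31$ that you left implicit, with $m=3,4,4$ respectively).
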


\begin{proof}
(1). Suppose $p=6m+t$, where $1 \le t \le 5$. Then $\gcd(m, p)=1$ and $\|mS\|_g'= \frac{p-t}{6} + \frac{p-t}{6} + \frac{2p-8t}{6} + t +\frac{2p+4t}{6}=p$. Therefore, $\ind(S)=1$.

(2). Suppose $p=7m+t$, where $1 \le t \le 6$. Then $\gcd(m, p)=1$ and $\|mS\|_g'= \frac{p-t}{7} + \frac{p-t}{7} + \frac{2p-9t}{7} + t +\frac{3p+4t}{7}=p$. Therefore, $\ind(S)=1$.

(3). Suppose $p=4m+t$, where $1 \le t \le 3$. Then $\gcd(m, p)=1$ and $\|mS\|_g'= \frac{p-t}{4} + \frac{p-t}{4} + \frac{p-5t}{4} + t +\frac{p+3t}{4}=p$. Therefore, $\ind(S)=1$.

(4). Suppose $p=5m+t$, where $1 \le t \le 4$. Then $\gcd(m, p)=1$ and $\|mS\|_g'= \frac{p-t}{5} + \frac{p-t}{5} + \frac{p-6t}{5} + t +\frac{2p+3t}{5}=p$. Therefore, $\ind(S)=1$.
\end{proof}

\bigskip
\section{Proof of main theorem}

\medskip

In this section we determine the index of every minimal zero-sum sequence $S$ of length 5 over a cyclic group of a prime order with $\mathsf h(S) \geq 2$. Let   $G= $ be a cyclic group of  prime order $p \ge 31$  and $S\in \mathcal F(G)$ be a minimal zero-sum sequence of length $5$. We will show that $\ind(S)=1$ except in the case that  $S=g^2(\frac{p-1}{2}g)(\frac{p+3}{2}g)((p-3)g)$  for some $g\in G$.

\smallskip

According to Proposition~\ref{index 1}, we may always assume that $\mathsf h(S) = 2$. Since $p$ is a prime, there exists $g\in G$ such that $S=g^2(x_1g)(x_2g)(x_3g)$, where $1< x_1 \le x_2 \le x_3 < p-2$. This implies that $1+1+x_2+x_2+x_3 < 3p$. If $1+1+x_1+x_2+x_3=p$, then $\ind(S)=1$. So we may assume that $1+1+x_1+x_2+x_3 =2p$. If $x_3 >x_2 > x_1 > \frac{p}{2}$, then $ \|2S\|_g'= 2 + 2 + (2x_1-p)+(2x_2-p)+(2x_3-p)=p $, and hence $\ind(S)=1$. So we may assume that $x_1 < \frac{p}{2}$. Clearly $x_2> \frac{p}{2}$, otherwise $1+1+x_1+x_2+x_3 < 1+1+ \frac{p}{2}+ \frac{p}{2}+x_3< 2p$, yielding a contradiction. Let $c=x_1, b = p-x_2,$ and $ a=p-x_3$. Then we can write $S$ in the form
\begin{equation}\label{S}
S=g^2(cg)((p-b)g)((p-a)g),
\end{equation}
where $2+c=a+b$ and $2 < a \le b < c < \frac{p}{2}.$


By Proposition \ref{index 2}, it suffices to show that if $a\ne 3 $ or $c \ne \frac{p-1}{2}$, then $\ind(S)=1$.  To do so, we will find $k$ and $m$ such that
\begin{equation}\label{*}
\frac{kp}{c} \le m <\frac{kp}{b}, \ \gcd(m,p)=1, \ 1 \le k \le b,  \mbox{ and }   ma < p.
\end{equation}
Then $\|mS\|_g'\le m +m + (mc-kp)+(kp - mb)+(p-ma)=p$, and thus $\ind(S)=1$.

Let $k_1$ be the largest positive integer such that $\lceil \frac{(k_1-1)p}{c} \rceil = \lceil \frac{(k_1-1)p}{b} \rceil$ and $\frac{k_1p}{c} \le m_1 < \frac{k_1p}{b}$. Since $\frac{bp}{c} \le p-1 < p = \frac{bp}{b}$ and $\frac{tp}{b} - \frac{tp}{c} = \frac{t(c-b)p}{bc} > 2$ for all $ t \ge b$, such integer $k_1$ always exists and $ k_1 \le b$.  Since $\lceil \frac{(k_1-1)p}{c} \rceil = \lceil \frac{(k_1-1)p}{b} \rceil$, we have
\begin{equation}\label{k1 is minimal}
1 > \frac{(k_1-1)p}{b}- \frac{(k_1-1)p}{c} = \frac{(k_1-1)p(c-b)}{bc} =\frac{(k_1-1)p(a-2)}{bc}.
\end{equation}

\medskip

Throughout this section we always assume that $S$ and $k_1$  are defined as above. We first handle some special cases, and then provide a proof of the main theorem.

\medskip

In terms of Proposition~\ref{small p with index 2}, from now on we may always assume that $p\geq 31$.
\medskip

\begin{lemma}\label{special case 1}
If $S$ is  a minimal zero-sum sequence such that $\ k_1 \ge 2$, $\ 3 < \frac{p}{c} < \frac{p}{b} < 4$, $\ a=3, \ b=3k_1-1$ and $c=3k_1$, then $\ind(S)=1$.
\end{lemma}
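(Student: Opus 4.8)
The plan is to show that the four hypotheses cannot hold simultaneously once $p\ge 31$, so that the asserted equality $\ind(S)=1$ holds \emph{vacuously}. This framing is forced on us, because the usual construction \eqref{*} is structurally obstructed here: with $c=3k_1$ the left endpoint of the interval attached to $k=k_1$ is exactly $\frac{k_1p}{c}=\frac p3$, so the smallest integer $m_1$ admissible in \eqref{*} for $k=k_1$ is $\lceil p/3\rceil$, and then $ma=3\lceil p/3\rceil>p$ violates the requirement $ma<p$ (in fact one checks $\|m_1S\|_g'=2p$ for this $m_1$). Any $m$ with $ma<p$ must satisfy $m<p/3$, which forces $k\le k_1-1$; but the defining property of $k_1$ gives $\lceil\frac{(k_1-1)p}{c}\rceil=\lceil\frac{(k_1-1)p}{b}\rceil$, i.e.\ the half-open interval $[\frac{(k_1-1)p}{c},\frac{(k_1-1)p}{b})$ contains no integer, so no admissible pair with $k=k_1-1$ exists either. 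Thus \eqref{*} cannot produce index $1$, and the only way the statement can be true is that no $S$ of the prescribed shape \eqref{S} exists for $p\ge 31$.

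To prove this I would assume such an $S$ exists and convert the ceiling identity into arithmetic. Since $3<p/c$ gives $p>3c=9k_1$, write $p=9k_1+t$ with $t\ge 1$; since $p/b<4$ gives $p<4b=12k_1-4$, one has $t\le 3k_1-5$. Substituting $c=3k_1$, $b=3k_1-1$ and expanding,
\begin{equation*}
\frac{(k_1-1)p}{3k_1}=3(k_1-1)+\frac{(k_1-1)t}{3k_1},\qquad
\frac{(k_1-1)p}{3k_1-1}=3(k_1-1)+1+\frac{(k_1-1)t-2}{3k_1-1}.
\end{equation*}
Because $\gcd(3k_1,p)=\gcd(3k_1-1,p)=1$ and $0<k_1-1<3k_1-1$, neither endpoint is an integer. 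Writing $w=(k_1-1)t$ and $r=w-3k_1\lfloor w/(3k_1)\rfloor\in\{1,\dots,3k_1-1\}$, the no-integer condition is $\frac{(k_1-1)p}{3k_1-1}\le\lceil\frac{(k_1-1)p}{3k_1}\rceil$, which by the two expansions reads $\frac{w-2}{3k_1-1}\le\lfloor w/(3k_1)\rfloor$; clearing denominators this collapses to the single inequality $w+r(3k_1-1)\le 6k_1$.

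Solving this inequality is the heart of the matter. From $w\ge r\ge 1$ one gets $3k_1r\le w+r(3k_1-1)\le 6k_1$, hence $r\le 2$; if $r=2$ then $w\le 2$, forcing $w=2$, while if $r=1$ then $w\le 3k_1+1$, forcing $w\in\{1,3k_1+1\}$. Translating each possibility back through $w=(k_1-1)t$, $p=9k_1+t$ and $1\le t\le 3k_1-5$ gives only the values $p\in\{19,20,25,28,32,49\}$, whose sole prime is $p=19$ (attained at $(k_1,t)=(2,1)$). Since $19<31$, this contradicts $p\ge 31$ and finishes the proof. The main obstacle is precisely this conversion of the defining ceiling equality of $k_1$ into the clean inequality $w+r(3k_1-1)\le 6k_1$, keeping careful track of integrality of the endpoints; the finite case-check afterward is routine. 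As a consistency check I would note that the excluded prime $p=19$ is exactly the sequence $S=g^2(6g)(14g)(16g)$ recorded with $\ind(S)=2$ in Proposition~\ref{small p with index 2}(3), which both corroborates the computation and shows that the restriction $p\ge 31$ is what makes the conclusion $\ind(S)=1$ correct.
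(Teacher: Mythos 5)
Your proof is correct, and it takes a genuinely different route from the paper's. The paper writes $p=3b+b_0=9k_1-3+b_0$, bounds $4\le b_0\le 14$ via the length inequality \eqref{k1 is minimal}, and then works case by case over $b_0$; because \eqref{k1 is minimal} is strictly weaker than the ceiling identity $\lceil\frac{(k_1-1)p}{c}\rceil=\lceil\frac{(k_1-1)p}{b}\rceil$, some cases cannot be killed by contradiction and are instead closed by citing the computational Proposition~\ref{small p with index 2} (for $p=31$ and $p\le 52$) or by exhibiting the explicit pair $k=2$, $m=7$ in \eqref{*} when $p=61$. You instead use the ceiling identity at full strength and solve it exactly: writing $p=9k_1+t$ and $w=(k_1-1)t=3k_1q+r$, the identity collapses to $q+r\le 2$, whose only solutions $(q,r)\in\{(0,1),(0,2),(1,1)\}$ force $p\in\{19,20,25,28,32,49\}$; the only prime there is $19<31$, so for $p\ge 31$ the hypotheses are contradictory and the lemma holds vacuously. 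I verified your expansions, the non-integrality of the endpoints, the reduction to $w+r(3k_1-1)\le 6k_1$, and the enumeration; all are correct. Your approach buys self-containedness (no reliance on the unproved Proposition~\ref{small p with index 2}, no ad hoc multiplier) and a sharper conclusion: it shows that the cases the paper settles constructively are actually inconsistent with the definition of $k_1$ --- e.g.\ for $p=61$, $b=17$, $c=18$ the interval $[\frac{5p}{c},\frac{5p}{b})$ contains $17$, so the ceiling identity for $k_1=6$ already fails; the paper's approach, by contrast, needs less exact bookkeeping of that identity. Two harmless blemishes in yours: the opening claim that vacuity is the \emph{only} way the lemma could be true overstates things (index $1$ need not arise via \eqref{*}, and excluding all $k<k_1$ leans on reading the paper's loosely worded definition of $k_1$ as ``smallest''), and your final list keeps values, such as $p=25$ from $(k_1,t)=(2,7)$, that already violate $t\le 3k_1-5$. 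Neither matters: the actual deduction uses only the $k_1-1$ ceiling identity, which is explicit in the definition, and your list is a superset of the feasible parameters, none of which is a prime $\ge 31$.
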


\begin{proof}
Suppose that $p=3b+b_0=9k_1-3+b_0$. Then $b_0 \not\equiv 0 \pmod 3$. Since $\frac{p}{c} > 3$,  we infer that $3 < b_0 < b = 3k_1-1$. By \eqref{k1 is minimal} we have $1 > \frac{(k_1-1)(9k_1-3+b_0)}{(3k_1-1)(3k_1)}.$ Hence $b_0k_1  - 9k_1 +3 - b_0 < 0$. If $b_0 \ge 15$, then $0 > b_0(k_1 -1)  - 9k_1 +3 \ge 15k_1-15-9k_1+3 \ge 0$, yielding a contradiction. Hence we must have $4 \le b_0 \le 14$ and $\gcd(b_0, 3) = 1$.

If $11 \le b_0 \le 14$, then $0 > b_0(k_1 -1)  - 9k_1 +3 = 11k_1-11-9k_1+3 = 2k_1-8 $ and thus $k_1 \le 3$. Since $11 \le b_0 < 3k_1-1$, we infer that $k_1 > 4$, a contradiction.

If $b_0 = 10$, then $0 > b_0(k_1 -1)  - 9k_1 +3 = 10k_1-10-9k_1+3 = k_1-7 $ and thus $k_1 < 7$. Since $10 = b_0 < 3k_1-1$, we infer that $k_1 \ge 4$. If $k_1 \le 5$, then $p\le 52$, the result follows from Lemma \ref{small p with index 2}. If $k_1=6$, then $p=61$. Since $\frac{2p}{c} < 7 < \frac{2p}{b}$ and $7a=21< p$, Equation~\eqref{*} holds and we are done.

If $b_0=8$, then $\frac{p}{c}= 3+ \frac{5}{3k_1}$ and $\frac{p}{b} = 3 + \frac{8}{3k_1-1}$. By the definition of $k_1$, we have $\lceil \frac{(k_1-1)p}{c} \rceil = \lceil \frac{(k_1-1)p}{b} \rceil$. Since $\frac{(k_1-1)p}{c} = 3k_1-3 + \frac{5(k_1-1)}{3k_1} < 3k_1-3+2, $ we have $\frac{(k_1-1)p}{b} = 3k_1-3 + \frac{8(k_1-1)}{3k_1-1} < 3k_1-3+2,$ then $k_1=2$. But $8=b_0 < 3k_1-1=5$, yielding a contradiction.

If $b_0=7$, then $\frac{p}{c}= 3+ \frac{4}{3k_1}$ and $\frac{p}{b} = 3 + \frac{7}{3k_1-1}$. As above since $\frac{(k_1-1)p}{c} = 3k_1-3 + \frac{4(k_1-1)}{3k_1} < 3k_1-3+2, $ we have $\frac{(k_1-1)p}{b} = 3k_1-3 + \frac{7(k_1-1)}{3k_1-1} < 3k_1-3+2,$ so $k_1 \le 4$. Since $7 = b_0 < 3k_1-1$, we infer that $k_1 \ge 3$. If $k_1=3$, then $p=31$, the lemma follows from Lemma \ref{small p with index 2}. If $k_1=4$, then $p=40$, a contradiction to that $p$ is prime.

If $b_0=5$, then $\frac{p}{c}= 3+ \frac{2}{3k_1}$ and $\frac{p}{b} = 3 + \frac{5}{3k_1-1}$. As above since $\frac{(k_1-1)p}{c} = 3k_1-3 + \frac{2(k_1-1)}{3k_1} < 3k_1-3+1, $ we have $\frac{(k_1-1)p}{b} = 3k_1-3 + \frac{5(k_1-1)}{3k_1-1} < 3k_1-3+1,$ so $k_1 <2$, yielding a contradiction.

If $b_0=4$, then $\frac{p}{c}= 3+ \frac{1}{3k_1}$ and $\frac{p}{b} = 3 + \frac{4}{3k_1-1}$. As above since $\frac{(k_1-1)p}{c} = 3k_1-3 + \frac{k_1-1}{3k_1} < 3k_1-3+1, $ we have $\frac{(k_1-1)p}{b} = 3k_1-3 + \frac{4(k_1-1)}{3k_1-1} < 3k_1-3+1,$ so $k_1 =2$. Therefore $p=19 < 31$, yielding a contradiction.
\end{proof}

\medskip

\begin{lemma}\label{special case 2}
There exists no minimal zero-sum sequence $S$ such that $\ k_1 \ge 2$, $\ 3 < \frac{p}{c} < \frac{p}{b} < 4$, $\ a=3, \ b=3k_1-2$ and $c=3k_1-1$.
\end{lemma}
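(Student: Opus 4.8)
The plan is to follow the template of Lemma~\ref{special case 1}: write $p$ in terms of $b$ and a remainder, translate the hypotheses $3 < \frac{p}{c} < \frac{p}{b} < 4$ into bounds on that remainder, and then derive a contradiction from the defining ceiling identity for $k_1$ together with inequality~\eqref{k1 is minimal}. Set $p = 3b + b_0 = 9k_1 - 6 + b_0$. Since $p$ is a prime larger than $3$ and $p \equiv b_0 \pmod 3$, we have $b_0 \not\equiv 0 \pmod 3$. The inequality $\frac{p}{c} > 3$ forces $p > 3c = 9k_1 - 3$, hence $b_0 \ge 4$, while $\frac{p}{b} < 4$ forces $p < 4b$, hence $b_0 < b = 3k_1 - 2$.

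Next I would feed $a = 3$ (which follows from $2 + c = a + b$ with $c = 3k_1 - 1$ and $b = 3k_1 - 2$) into~\eqref{k1 is minimal}. Since $a - 2 = 1$, that inequality reads $(k_1 - 1)p < bc = (3k_1 - 2)(3k_1 - 1)$, and substituting $p = 9k_1 - 6 + b_0$ and simplifying yields the clean bound $b_0(k_1 - 1) < 6k_1 - 4$. Combining $b_0 \ge 4$ with $b_0 < b = 3k_1 - 2$ already forces $b \ge 5$, hence $k_1 \ge 3$; then $k_1 - 1 \ge 2$ turns the bound into $b_0 < 6 + \frac{2}{k_1 - 1} \le 7$, so $b_0 \le 6$. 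Together with $b_0 \ge 4$ and $b_0 \not\equiv 0 \pmod 3$, only $b_0 \in \{4, 5\}$ survive.

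It remains to eliminate these two residues by examining the defining property $\lceil \frac{(k_1 - 1)p}{c} \rceil = \lceil \frac{(k_1 - 1)p}{b} \rceil$. For $b_0 = 4$ one computes $\frac{p}{c} = 3 + \frac{1}{3k_1 - 1}$ and $\frac{p}{b} = 3 + \frac{4}{3k_1 - 2}$, so that $\frac{(k_1 - 1)p}{c} = 3k_1 - 3 + \frac{k_1 - 1}{3k_1 - 1}$ has ceiling exactly $3k_1 - 2$, whereas $\frac{(k_1 - 1)p}{b} = 3k_1 - 3 + \frac{4(k_1 - 1)}{3k_1 - 2}$ has fractional excess exceeding $1$ once $k_1 \ge 3$, so its ceiling is at least $3k_1 - 1$. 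For $b_0 = 5$ the same computation gives $\frac{p}{c} = 3 + \frac{2}{3k_1 - 1}$ and $\frac{p}{b} = 3 + \frac{5}{3k_1 - 2}$, and here $\frac{5(k_1 - 1)}{3k_1 - 2} > 1$ for every $k_1 \ge 2$, so again the two ceilings disagree. In both cases the ceiling identity fails, contradicting the definition of $k_1$ and showing that no such $S$ exists.

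The main obstacle I anticipate is the bookkeeping in this final step: one must verify that the fractional parts $\frac{j(k_1 - 1)}{3k_1 - 1}$ (for $j = 1, 2$) stay strictly between $0$ and $1$ for all $k_1 \ge 3$, pinning the first ceiling to exactly $3k_1 - 2$, while simultaneously confirming that the companion fractional part attached to $b$ overshoots $1$. Keeping the two ceilings correctly separated across the whole range $k_1 \ge 3$, rather than only for small $k_1$, is where the argument must be handled with care.
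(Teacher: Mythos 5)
Your proof is correct and follows essentially the same route as the paper's: write $p=3b+b_0=9k_1-6+b_0$, bound $b_0$ via $3<\frac{p}{c}<\frac{p}{b}<4$ and inequality~\eqref{k1 is minimal}, then contradict the ceiling identity $\lceil \frac{(k_1-1)p}{c}\rceil = \lceil \frac{(k_1-1)p}{b}\rceil$ for the surviving residues. The only (harmless) difference is bookkeeping: you use $k_1\ge 3$ up front to cut the residue list to $\{4,5\}$, whereas the paper allows $b_0\le 7$ and disposes of $b_0=7$ in a separate short case; your version also makes explicit the $k_1\ge 3$ hypothesis that the paper's $b_0=4$ chain of inequalities tacitly relies on.
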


\begin{proof}
Assume to the contrary that such $S$ exists. Suppose $p=3b+b_0=9k_1-6+b_0$. Then $b_0 \not\equiv 0 \pmod 3$. Since $\frac{p}{c} > 3$,  we infer that $3 < b_0 < 3k_1-2$. By \eqref{k1 is minimal} we have $1 > \frac{(k_1-1)(9k_1-6+b_0)}{(3k_1-2)(3k_1-1)}.$ Hence $b_0k_1  - 6k_1 +4 - b_0 < 0$. If $b_0 \ge 8$, then $0 > b_0(k_1 -1)  - 6k_1 +4 \ge 8k_1-8-6k_1+4 \ge 0$, yielding a contradiction. Hence we must have $4 \le b_0 \le 7$.

If $b_0 = 7$, then $0 > b_0(k_1 -1)  - 6k_1 +4 = 7k_1-7-6k_1+4 = k_1-3 $ and thus $k_1 =2$. Since $7 = b_0 < 3k_1-2$, we infer that $k_1 > 3$, a contradiction.

If $b_0=5$, then $\frac{p}{c}= 3+ \frac{2}{3k_1-1}$ and $\frac{p}{b} = 3 + \frac{5}{3k_1-2}$. By the definition of $k_1$, we have $\lceil \frac{(k_1-1)p}{c} \rceil = \lceil \frac{(k_1-1)p}{b} \rceil$. But $\frac{(k_1-1)p}{c} = 3k_1-3 + \frac{2(k_1-1)}{3k_1-1} < 3k_1-3+1 < 3k_1-3 + \frac{5(k_1-1)}{3k_1-2} = \frac{(k_1-1)p}{b},$ yielding a contradiction.

If $b_0=4$, then $\frac{p}{c}= 3+ \frac{1}{3k_1-1}$ and $\frac{p}{b} = 3 + \frac{4}{3k_1-2}$. As above we have $\frac{(k_1-1)p}{c} = 3k_1-3 + \frac{k_1-1}{3k_1-1} < 3k_1-3+1 < 3k_1-3 + \frac{4(k_1-1)}{3k_1-2} = \frac{(k_1-1)p}{b},$ yielding a contradiction.

In all cases, we have found contradictions. Thus such sequence $S$ does not exist.
\end{proof}

\medskip
\begin{lemma}\label{special case 3}
If $S$ is a minimal zero-sum sequence such that $\ k_1 \ge 5$, $\ 2 < \frac{p}{c} < \frac{p}{b} < 3$, $\ a=4, \ b=4k_1-1$ and $c=4k_1+1$, then $\ind(S)=1$.
\end{lemma}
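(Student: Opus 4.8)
The plan is to follow the mechanism of Lemmas~\ref{special case 1} and~\ref{special case 2}: since $\ind(S)=1$ follows as soon as one exhibits $k,m$ meeting~\eqref{*}, it is enough to show that the hypotheses pin $p$ down to a very short list on which~\eqref{*} can be verified directly. Because $2<\frac{p}{b}<3$, I would first write $p=2b+b_0=8k_1-2+b_0$ with $0<b_0<b$; as $p$ is an odd prime and $8k_1-2$ is even, $b_0$ must be odd, while $\frac{p}{c}>2$ forces $b_0>4$. Substituting $a=4$, $b=4k_1-1$, $c=4k_1+1$ and $p=8k_1-2+b_0$ into~\eqref{k1 is minimal} gives
\[
1>\frac{2(k_1-1)p}{(4k_1-1)(4k_1+1)},\qquad\text{equivalently}\qquad 2(k_1-1)b_0<20k_1-5,
\]
and for $b_0\ge 13$ this contradicts $k_1\ge 5$. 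Hence $b_0\in\{5,7,9,11\}$.

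The next step is to invoke the defining ceiling equality $\lceil\frac{(k_1-1)p}{c}\rceil=\lceil\frac{(k_1-1)p}{b}\rceil$ together with the decompositions
\[
\frac{(k_1-1)p}{c}=2(k_1-1)+\frac{(b_0-4)(k_1-1)}{4k_1+1},\qquad
\frac{(k_1-1)p}{b}=2(k_1-1)+\frac{b_0(k_1-1)}{4k_1-1},
\]
in order to bound $k_1$ case by case. For $b_0\in\{5,7\}$ the $c$-fractional part lies in $(0,1)$, so the common ceiling equals $2k_1-1$ and the $b$-side is forced to satisfy $\frac{b_0(k_1-1)}{4k_1-1}\le 1$, i.e.\ $k_1\le 4$ (respectively $k_1\le 2$), contradicting $k_1\ge 5$. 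For $b_0=11$ the $c$-fractional part lies in $(1,2)$, so the common ceiling is $2k_1$ and the $b$-side must satisfy $\frac{11(k_1-1)}{4k_1-1}\le 2$, i.e.\ $k_1\le 3$, again impossible. This leaves only $b_0=9$, where the crude estimate $\frac{5(k_1-1)}{4k_1+1}<2$ yields merely $\frac{9(k_1-1)}{4k_1-1}\le 2$, i.e.\ $k_1\le 7$; thus $k_1\in\{5,6,7\}$ and $p\in\{47,55,63\}$.

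It then remains to clear the short residual list. Here $55$ and $63$ are composite, and for $p=47$ (so $b=19$, $c=21$) I would verify~\eqref{*} with $k=3$, $m=7$: one has $\frac{3p}{c}=\frac{141}{21}<7<\frac{141}{19}=\frac{3p}{b}$ and $7a=28<p$, so $\|7S\|_g'=p$ and therefore $\ind(S)=1$ (alternatively one may quote Proposition~\ref{small p with index 2}, since $47\in[31,59]$). The step I expect to be the main obstacle is precisely $b_0=9$: in contrast to $b_0\in\{5,7,11\}$, the fraction $\frac{5(k_1-1)}{4k_1+1}$ is not uniformly below $1$ (it is $\ge 1$ once $k_1\ge 6$), so the clean ``ceiling $=2k_1-1$'' reduction is unavailable, and one must either split the argument according to whether $k_1\le 6$ or $k_1=7$ or else simply test the finitely many surviving primes against~\eqref{*}.
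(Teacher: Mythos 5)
Your proof is correct, and it runs on the same machinery as the paper's: the same parametrization $p=2b+b_0=8k_1-2+b_0$, the same use of \eqref{k1 is minimal} to force $b_0\in\{5,7,9,11\}$, and the same ceiling-equality argument in the cases $b_0=5,7$. The one genuine divergence is at $b_0=11$: you show this case is vacuous, since for $k_1\ge 5$ one has $\frac{7(k_1-1)}{4k_1+1}\in(1,2)$ while $\frac{11(k_1-1)}{4k_1-1}>2$, so the defining equality $\lceil\frac{(k_1-1)p}{c}\rceil=\lceil\frac{(k_1-1)p}{b}\rceil$ fails outright; the paper instead extracts only $k_1\le 8$ from \eqref{k1 is minimal} and then disposes of $k_1=5,\dots,8$ one at a time, discarding $49$ and $57$ as composite and exhibiting explicit multipliers ($m=7$ and $m=9$) for the other two values. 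Your route is cleaner there, and it in fact quietly repairs a slip in the paper: with $b_0=11$ and $k_1=7$ one gets $p=8k_1-2+b_0=65$ (composite), not $p=67$ as printed, so the paper's verification at that value concerns a configuration that cannot occur. The remaining small differences are immaterial: at $b_0=9$ you obtain the non-strict bound $k_1\le 7$ where the paper claims $k_1<7$ (the paper's strict inequality needs the extra observation that $\frac{(k_1-1)p}{b}$ cannot be an integer), but the extra value $k_1=7$ gives $p=63$, composite; and for the surviving prime $p=47$ your explicit pair $k=3$, $m=7$ satisfies \eqref{*} just as well as the paper's appeal to Proposition~\ref{small p with index 2}.
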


\begin{proof}
Suppose $p=2b+b_0=8k_1-2+b_0$. Then $b_0 \equiv 1 \pmod 2$. Since $\frac{p}{c} > 2$,  we infer that $4< b_0 < 4k_1-1$. By \eqref{k1 is minimal} we have $1 > \frac{2(k_1-1)(8k_1-2+b_0)}{(4k_1-1)(4k_1+1)}.$ Hence $2b_0k_1  - 20k_1 +5 - 2b_0 < 0$. If $b_0 \ge 12$, then $0 > b_0(2k_1 -2)  - 20k_1 +5 \ge 24k_1-24-20k_1+5 \ge 0$, yielding a contradiction. Hence we must have $5\le b_0 \le 11$.

If $b_0 = 11$, then $0 > b_0(2k_1 -2)   - 20k_1 +5 = 22k_1-22-20k_1 +5 = 2k_1-17 $ and thus $k_1 \le 8$. If $k_1=8$, then $p=73, \ b= 31, \ c= 33$. Since $\frac{4p}{c} < 9 < \frac{4p}{b}$ and $9a=36<p$, we are done. If $k_1=7$, then $p=67,  \ b= 27, \ c= 29$. Since $\frac{3p}{c} < 7 < \frac{3p}{b}$ and $7a=28< p$, we are done. If $k_1=6$, then $p=57$, a contradiction to $p$ is prime. If $k_1=5$, then $p=49$, a contradiction again.

If $b_0=9$, then $\frac{p}{c}= 2+ \frac{5}{4k_1+1}$ and $\frac{p}{b} = 2 + \frac{9}{4k_1-1}$. By the definition of $k_1$, we have $\lceil \frac{(k_1-1)p}{c} \rceil = \lceil \frac{(k_1-1)p}{b} \rceil$. Since $\frac{(k_1-1)p}{c} = 2k_1-2 + \frac{5(k_1-1)}{4k_1+1} < 2k_1-2+2, $ we have $\frac{(k_1-1)p}{b} = 2k_1-2 + \frac{9(k_1-1)}{4k_1-1} < 2k_1-2+2,$ then $k_1< 7$. If  $k_1=6$, then $p=55$, a contradiction to that $p$ is prime. If $k_1=5$, then $p=47, \ b=19, \ c= 21$, the result follows from Lemma \ref{small p with index 2}.

If $b_0=7$, then $\frac{p}{c}= 2+ \frac{3}{4k_1+1}$ and $\frac{p}{b} = 2 + \frac{7}{4k_1-1}$. By the definition of $k_1$, we have $\lceil \frac{(k_1-1)p}{c} \rceil = \lceil \frac{(k_1-1)p}{b} \rceil$. But $\frac{(k_1-1)p}{c} = 2k_1-2 + \frac{3(k_1-1)}{4k_1+1} < 2k_1-2+1 <  2k_1-2 + \frac{7(k_1-1)}{4k_1-1} = \frac{(k_1-1)p}{b},$ yielding a contradiction.

If $b_0=5$, then $\frac{p}{c}= 2+ \frac{1}{4k_1+1}$ and $\frac{p}{b} = 2 + \frac{5}{4k_1-1}$. As above we have  $\frac{(k_1-1)p}{c} = 2k_1-2 + \frac{(k_1-1)}{4k_1+1} < 2k_1-2+1 < 2k_1-2 + \frac{5(k_1-1)}{4k_1-1}  = \frac{(k_1-1)p}{b},$ yielding a contradiction.
\end{proof}

\medskip
\begin{lemma}\label{special case 4}
There exists no minimal zero-sum sequence $S$ such that $\ k_1 \ge 5$, $\ 2 < \frac{p}{c} < \frac{p}{b} < 3$, $\ a=4, \ b=4k_1-2$ and $c=4k_1$.
\end{lemma}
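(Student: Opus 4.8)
The plan is to argue by contradiction, in exact parallel with Lemma~\ref{special case 2}. Supposing such an $S$ exists, I would first set $p=2b+b_0$, so that $p=2(4k_1-2)+b_0=8k_1-4+b_0$ with $0<b_0<b$. Since $8k_1-4$ is even and $p$ is an odd prime, $b_0$ must be odd. The hypothesis $\tfrac{p}{c}>2$ forces $p>2c=8k_1$, i.e. $8k_1-4+b_0>8k_1$, whence $b_0>4$ and so $b_0\ge 5$. At the same time $\tfrac{p}{b}<3$ gives $b_0<b$, which is consistent.

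Next I would feed the data $a=4$ and $bc=(4k_1-2)(4k_1)=16k_1^2-8k_1$ into \eqref{k1 is minimal}, which reads $1>\frac{(k_1-1)p(a-2)}{bc}=\frac{2(k_1-1)p}{16k_1^2-8k_1}$, i.e. $(k_1-1)p<8k_1^2-4k_1$. Substituting $p=8k_1-4+b_0$ and simplifying yields $b_0(k_1-1)-8k_1+4<0$. For $k_1\ge 5$, the choice $b_0\ge 9$ gives $b_0(k_1-1)-8k_1+4\ge 9(k_1-1)-8k_1+4=k_1-5\ge 0$, a contradiction; hence $b_0\le 8$. Combined with $b_0$ odd and $b_0\ge 5$, only $b_0\in\{5,7\}$ survive.

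The decisive step is to eliminate these two residues using the defining property of $k_1$, namely $\lceil\frac{(k_1-1)p}{c}\rceil=\lceil\frac{(k_1-1)p}{b}\rceil$. Writing $\frac{p}{c}=2+\frac{b_0-4}{4k_1}$ and $\frac{p}{b}=2+\frac{b_0}{4k_1-2}$, I would estimate the fractional contributions of $(k_1-1)$ times each. For $b_0=7$ one gets $\frac{(k_1-1)p}{c}=2k_1-2+\frac{3(k_1-1)}{4k_1}<2k_1-1$, so its ceiling is $2k_1-1$, whereas $\frac{(k_1-1)p}{b}=2k_1-2+\frac{7(k_1-1)}{4k_1-2}$ satisfies $1<\frac{7(k_1-1)}{4k_1-2}<2$ for $k_1\ge 2$, so its ceiling is $2k_1$; the two ceilings differ, contradicting the definition of $k_1$. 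The case $b_0=5$ is entirely analogous: $\frac{(k_1-1)p}{c}=2k_1-2+\frac{k_1-1}{4k_1}<2k_1-1$ has ceiling $2k_1-1$, while $\frac{(k_1-1)p}{b}=2k_1-2+\frac{5(k_1-1)}{4k_1-2}$ has $\frac{5(k_1-1)}{4k_1-2}>1$ precisely when $k_1>3$, hence ceiling $2k_1$ for $k_1\ge 5$; again the ceilings differ. Since both surviving residues are ruled out, no such $S$ exists.

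The main obstacle is that the crude bound from \eqref{k1 is minimal} does not by itself finish the argument: both $b_0=5$ and $b_0=7$ satisfy $b_0(k_1-1)-8k_1+4<0$ automatically, so that inequality alone cannot eliminate them. The genuine content lies in the fractional-part estimates against the minimality of $k_1$, where one must verify that the fractional part of $\frac{(k_1-1)p}{c}$ stays below $1$ while that of $\frac{(k_1-1)p}{b}$ exceeds $1$. The hypothesis $k_1\ge 5$ is exactly what is needed to secure the latter inequality in the $b_0=5$ case, which is why the lemma is stated with that restriction (the smaller values of $k_1$, and hence $p$, being absorbed into Proposition~\ref{small p with index 2}).
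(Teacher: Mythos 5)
Your proof is correct and follows the paper's argument essentially verbatim: the same decomposition $p=2b+b_0=8k_1-4+b_0$, the same use of \eqref{k1 is minimal} to force $b_0\in\{5,7\}$, and the same ceiling comparison $\frac{(k_1-1)p}{c}<2k_1-1<\frac{(k_1-1)p}{b}$ to contradict the definition of $k_1$ in both residue cases. One small quibble with your closing commentary only: the hypothesis $k_1\ge 5$ is what makes the $b_0\ge 9$ elimination work (via $k_1-5\ge 0$), whereas the $b_0=5$ ceiling estimate already holds for $k_1\ge 4$; this does not affect the validity of your proof.
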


\begin{proof}
Assume to the contrary that such $S$ exists. Suppose $p=2b+b_0=8k_1-4+b_0$. Then $b_0 \equiv 1 \pmod 2$. Since $\frac{p}{c} > 2$,  we infer that $4< b_0 < 4k_1-2$. By \eqref{k1 is minimal} we have $1 > \frac{2(k_1-1)(8k_1-4+b_0)}{(4k_1-2)(4k_1)}.$ Hence $b_0k_1  - 8k_1 + 4 - b_0 < 0$. If $b_0 \ge 9$, then $0 > b_0(k_1 -1)  - 8k_1 +4 \ge 9k_1-9-8k_1+4 \ge 0$, yielding a contradiction. Hence we must have $5\le b_0 \le 7$.

If $b_0=7$, then $\frac{p}{c}= 2+ \frac{3}{4k_1}$ and $\frac{p}{b} = 2 + \frac{7}{4k_1-2}$. By the definition of $k_1$, we have $\lceil \frac{(k_1-1)p}{c} \rceil = \lceil \frac{(k_1-1)p}{b} \rceil$. But $\frac{(k_1-1)p}{c} = 2k_1-2 + \frac{3(k_1-1)}{4k_1} < 2k_1-2+1 < 2k_1-2 + \frac{7(k_1-1)}{4k_1-1} = \frac{(k_1-1)p}{b},$ yielding a contradiction.

If $b_0=5$, then $\frac{p}{c}= 2+ \frac{1}{4k_1}$ and $\frac{p}{b} = 2 + \frac{5}{4k_1-2}$. As above  $\frac{(k_1-1)p}{c} = 2k_1-2 + \frac{(k_1-1)}{4k_1} < 2k_1-2+1 < 2k_1-2 + \frac{5(k_1-1)}{4k_1-2} = \frac{(k_1-1)p}{b},$ yielding a contradiction.
\end{proof}

\medskip
\begin{lemma}\label{special case 5}
There exists no minimal zero-sum sequence $S$ such that $\ k_1 \ge 5$, $\ 2 < \frac{p}{c} < \frac{p}{b} < 3$, $\ a=4, \ b=4k_1-3$ and $c=4k_1-1$.
\end{lemma}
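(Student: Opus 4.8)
The plan is to argue by contradiction, following exactly the template already used in Lemmas~\ref{special case 2} and \ref{special case 4}: assume such a minimal zero-sum sequence $S$ exists and extract a contradiction from the minimality property \eqref{k1 is minimal} of $k_1$. Since we are in the regime $2 < \frac{p}{c} < \frac{p}{b} < 3$, I would set $p = 2b + b_0 = 8k_1 - 6 + b_0$; because $p$ is odd and $2b$ is even, $b_0$ is necessarily odd. The two strict inequalities $p > 2c = 8k_1 - 2$ and $p < 3b = 12k_1 - 9$ then confine $b_0$ to the range $4 < b_0 < 4k_1 - 3 = b$.

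The key step is to substitute $b = 4k_1 - 3$, $c = 4k_1 - 1$ and $a - 2 = 2$ into \eqref{k1 is minimal}, i.e.\ into $bc > (k_1-1)p(a-2)$. Since here $c - b = a - 2 = 2$, the quadratic-in-$k_1$ terms cancel and I expect to be left with a linear inequality of the shape $b_0(2k_1 - 2) - 12k_1 + 9 < 0$. For any odd $b_0 \ge 7$ the left-hand side is at least $2k_1 - 5$, which is positive whenever $k_1 \ge 5$, a contradiction; hence the sole surviving candidate is $b_0 = 5$, the only odd integer in $(4,7)$. In this way the entire argument reduces to ruling out the single residue $b_0 = 5$.

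For $b_0 = 5$ one has $p = 8k_1 - 1$, so that $\frac{p}{c} = 2 + \frac{1}{4k_1 - 1}$ and $\frac{p}{b} = 2 + \frac{5}{4k_1 - 3}$. The final and most delicate step is the ceiling comparison built into the definition of $k_1$, namely $\lceil \frac{(k_1-1)p}{c} \rceil = \lceil \frac{(k_1-1)p}{b} \rceil$. I would compute $\frac{(k_1-1)p}{c} = 2k_1 - 2 + \frac{k_1-1}{4k_1 - 1} < 2k_1 - 1$ (as $\frac{k_1-1}{4k_1-1} < 1$), while $\frac{(k_1-1)p}{b} = 2k_1 - 2 + \frac{5(k_1-1)}{4k_1 - 3} > 2k_1 - 1$, using $5(k_1-1) > 4k_1 - 3$ for $k_1 > 2$. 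Thus the integer $2k_1 - 1$ lies strictly between the two quantities, forcing $\lceil \frac{(k_1-1)p}{c} \rceil \le 2k_1 - 1 < 2k_1 \le \lceil \frac{(k_1-1)p}{b} \rceil$ and contradicting the defining equality of $k_1$.

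The only point requiring genuine care is this concluding straddling argument: one must check that the two fractional contributions fall on opposite sides of the integer $2k_1 - 1$ rather than landing in the same unit interval, where the two ceilings could coincide. Once the candidate list has been trimmed to $b_0 = 5$, the remaining arithmetic is entirely routine and mirrors the $b_0 = 5$ subcases of Lemmas~\ref{special case 3} and \ref{special case 4}; I expect no further case analysis to be needed, since the hypotheses $k_1 \ge 5$ and $p \ge 31$ already suppress the small-$p$ exceptions that had to be treated separately in the earlier lemmas.
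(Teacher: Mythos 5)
Your proposal is correct and follows essentially the same route as the paper's own proof: the same substitution $p=2b+b_0=8k_1-6+b_0$ into \eqref{k1 is minimal} yielding $b_0(2k_1-2)-12k_1+9<0$ and hence $b_0=5$, followed by the same ceiling-straddling contradiction $\frac{(k_1-1)p}{c} < 2k_1-1 < \frac{(k_1-1)p}{b}$ against the defining equality $\lceil \frac{(k_1-1)p}{c} \rceil = \lceil \frac{(k_1-1)p}{b} \rceil$. All the computations you flag as needing care check out, so there is no gap.
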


\begin{proof}
Assume to the contrary that such $S$ exists. Suppose $p=2b+b_0=8k_1-6+b_0$. Then $b_0 \equiv 1 \pmod 2$. Since $\frac{p}{c} > 2$,  we infer that $4< b_0 < 4k_1-3$. By \eqref{k1 is minimal} we have $1 > \frac{2(k_1-1)(8k_1-6+b_0)}{(4k_1-3)(4k_1-1)}.$ Hence $2b_0k_1  - 12k_1 + 9 - 2b_0 < 0$. If $b_0 \ge 7$, then $0 > b_0(2k_1 -2)  - 12k_1 +9 \ge 14k_1-14-12k_1+9 \ge 0$, giving a contradiction. Hence we must have $b_0 =5$.

If $b_0=5$, then $\frac{p}{c}= 2+ \frac{1}{4k_1-1}$ and $\frac{p}{b} = 2 + \frac{5}{4k_1-3}$. By the definition of $k_1$, we have $\lceil \frac{(k_1-1)p}{c} \rceil = \lceil \frac{(k_1-1)p}{b} \rceil$. But $\frac{(k_1-1)p}{c} = 2k_1-2 + \frac{(k_1-1)}{4k_1-1} < 2k_1-2+1 < 2k_1-2 + \frac{5(k_1-1)}{4k_1-3} = \frac{(k_1-1)p}{b},$ yielding a contradiction.
\end{proof}

\medskip
\begin{lemma}\label{special case 6}
If $S$ is a minimal zero-sum sequence such that $\ k_1 \ge 5$, $\ 2 < \frac{p}{c} < \frac{p}{b} < 3$, $\ a=3, \ b=2k_1+k_0$ and $c=2k_1+k_0+1< \frac{p-1}{2}$, where $0 \le k_0 \le k_1-1$, then $\ind(S)=1$.
\end{lemma}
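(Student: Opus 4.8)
The plan is to exhibit a pair $(k,m)$ satisfying \eqref{*}, which by the computation preceding \eqref{*} forces $\ind(S)=1$. Since $a=3$ the condition $ma<p$ is simply $m<p/3$, and since $c=b+1$ the two-sided bound $\frac{kp}{c}\le m<\frac{kp}{b}$ is equivalent to $mb<kp\le mc$, i.e. $kp-mb\in(0,m]$. Thus the task reduces to: \emph{find integers $k\ge1$ and $1\le m<p/3$ with $0<kp-mb\le m$}, equivalently a fraction $k/m$ of denominator $m<p/3$ lying in the interval $(b/p,\,c/p)$ of width $1/p$.

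I would first record the arithmetic of the present regime. Write $p=2b+b_0$. As $p$ is odd and $2b$ is even, $b_0$ is odd; the hypothesis $c<\frac{p-1}{2}$ is equivalent to $p>2b+3$, i.e. $b_0\ge5$, and the regime inequality $\frac{p}{b}<3$ is equivalent to $b>p/3$, i.e. $b_0=p-2b<p/3$. The bound $b_0<p/3$ is the decisive structural fact, and here it holds automatically. (For comparison, the excluded sequence of Proposition~\ref{index 2} is exactly $b_0=3$, where $c=\frac{p-1}{2}$.)

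Guided by $p=2b+b_0$, I would search for a solution of the shape $m=2k+j$ with $j\ge1$. Substituting into $mb<kp\le mc$ and using $p-2b=b_0$ and $p-2c=b_0-2$, the two inequalities collapse to
\begin{equation*}
\frac{jb}{b_0}<k\le\frac{jc}{b_0-2},
\end{equation*}
an interval for $k$ of length $\frac{jp}{b_0(b_0-2)}$. Any admissible $k$ then yields $m=2k+j\le\frac{jp}{b_0-2}$, so restricting to $j\le\frac{b_0-2}{3}$ gives $3m\le p$, hence $m<p/3$ (as $3\nmid p$); the requirement $k\le b$ of \eqref{*} is automatic since $k\le\frac{jc}{b_0-2}\le c/3<b$. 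It therefore suffices to produce one integer $j$ with $1\le j\le\frac{b_0-2}{3}$ for which the $k$-interval contains an integer. The generic device is to take the least $j$ making the interval have length $\ge1$, namely $j=\lceil b_0(b_0-2)/p\rceil$; since $b_0(b_0-2)/p<(b_0-2)/3$ this $j$ usually respects $j\le\frac{b_0-2}{3}$, and for $b_0=5,7$ it forces $j=1$, where $(b/b_0,\,c/(b_0-2)]$ is checked to contain an integer directly.

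The main obstacle is the existence of such a $(k,j)$ when the count is tight, and this happens precisely at the upper edge of the regime, $\frac{p}{b}\uparrow 3$, where $\delta:=3b-p\ge1$ (equivalently $p-3b_0$) is small: then the $k$-interval is shorter than $1$ for every $j\le\frac{b_0-2}{3}$, so the length argument fails even for large $p$. There I would instead approximate $b/p$ directly by the fractions $\frac{k}{3k-1}\downarrow\frac13$ (these are exactly the large-$j$ members of the same family, with $j=k-1$): a short computation shows the admissible denominators are those $m\equiv2\pmod 3$ lying in $\bigl(\frac{p}{\delta+3},\,\frac{p}{3}\bigr)$, an interval of length $\frac{p\delta}{3(\delta+3)}$ that contains such an $m$ once $p$ exceeds a small bound. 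The finitely many residual small primes are absorbed by Proposition~\ref{small p with index 2} (which is why $p\ge31$, i.e. $k_1\ge5$, is assumed). It is worth emphasizing that the hypothesis $c<\frac{p-1}{2}$, i.e. $b_0\ge5$, is exactly what gives the constraint $j\le\frac{b_0-2}{3}$ a positive integer to work with: at the excluded value $b_0=3$ one has $j\le\frac13$ and the construction collapses, consistent with $\ind(S)=2$ there.
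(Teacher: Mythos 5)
Your opening reduction is correct and is essentially the paper's own: the paper also writes $p=2b+b_0$ and reduces the lemma to finding a fraction $x/y$ (your $j/k$) with $\frac{b_0-2}{b+1}<\frac{x}{y}<\frac{b_0}{b}$ and $x,y\le\lfloor b/3\rfloor$, which plays exactly the role of your constraint $j\le\frac{b_0-2}{3}$; your bookkeeping --- the $k$-interval $\bigl(\frac{jb}{b_0},\frac{jc}{b_0-2}\bigr]$ of length $\frac{jp}{b_0(b_0-2)}$, the bound $m\le\frac{jp}{b_0-2}$, and $k\le c/3<b$ --- all checks out against \eqref{*}. Where you part ways with the paper is in how the fraction is produced: the paper runs an explicit case analysis on $b$ and $b_0$ modulo $3$, with ceiling-corrected choices of $y$ in the tight subcases, whereas you propose a pigeonhole on interval length, a fallback family $m=3k-1$, and an appeal to Proposition~\ref{small p with index 2}.

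The gap is that you never prove these three devices cover all cases, and your hedging language sits exactly where the paper's proof does its real work. (i) You say $j=\lceil b_0(b_0-2)/p\rceil$ ``usually'' satisfies $j\le\frac{b_0-2}{3}$; in fact, writing $\delta=3b-p$, this device fails whenever $b_0\equiv 1\pmod 3$ and $\delta\le 3$, \emph{no matter how large $p$ is} (e.g.\ $p=61$, $b_0=19$, $\delta=2$), and also for $(b_0,\delta)=(9,2),(13,4),(7,5)$; you must delimit this failure set before handing the remainder to the fallback, and you do not. (ii) The fallback's admissible range is misstated: for $m=3k-1$ the condition $mb<kp$ gives $m<p/\delta$, not $m<p/3$, so your interval $\bigl(\frac{p}{\delta+3},\frac{p}{3}\bigr)$ is only correct for $\delta\le 3$; to use it you need to know that no failure of (i) has $\delta\ge 4$, which is true for $p>59$ (since $\delta\equiv -p\pmod 3$ excludes $\delta=3$, and the $\delta=4,5$ failures force $b_0\le 13$, hence $p\le 47$), but no argument of this kind appears in your write-up. (iii) ``The finitely many residual small primes'' must be shown to lie in $[31,59]$, because Proposition~\ref{small p with index 2} covers only $p\le 59$; this bound is asserted, never computed. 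The plan is completable --- carrying out (i)--(iii) does yield a proof genuinely different from the paper's mod-$3$ constructions, and arguably cleaner --- but as written the proposal is a strategy with its hardest steps left as claims, not a proof.
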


\begin{proof}
We will show that there exist $x, y \in [1, \lfloor\frac{b}{3}\rfloor]$ such that $\frac{p}{c}  < 2+ \frac{x}{y} < \frac{p}{b}$. Then $(2y+x)a < \frac{yp}{b} \time 3 \le p$ and we are done.

Suppose $p=2b+b_0$, where $1 \le b_0 \le b-1$. Since $p$ is prime, we infer that $b_0 \equiv 1 \pmod 2$. Note that $c=b+1$. It suffices to show there exist $x, y \in [1, \lfloor\frac{b}{3}\rfloor]$ such that $\frac{b_0-2}{b+1}  < \frac{x}{y} < \frac{b_0}{b}$.

\medskip
\noindent {\bf Case 1.} $b \equiv 0 \pmod 3$. Since $p$ is prime, we infer that $b_0 \not\equiv 0 \pmod 3$. Suppose $b=3s$.

If  $b_0=3t+1$, then let $x=t$ and $y=s$. We infer that $\frac{3t-1}{3s+1} < \frac{t}{s} < \frac{3t+1}{3s}$, and we are done.

If  $b_0=3t+2$, then let $x=t$ and $y=s$. We infer that $\frac{3t}{3s+1} < \frac{t}{s} < \frac{3t+2}{3s}$, and we are done.

\medskip
\noindent {\bf Case 2.} $b \equiv 1 \pmod 3$. Since $p$ is prime, we infer that $b_0 \not\equiv 1 \pmod 3$.  Suppose $b=3s+1$.

First assume that $b_0=3t  \equiv 1 \pmod 2$. Since $c = b+1 < \frac{p-1}{2}= b + \frac{b_0-1}{2}$, we infer that $b_0 > 3$ and thus $t \ge 3$. If $s < 2t -2 $, then  let $x=t-1$ and $y=s$. We infer that $\frac{3t-2}{3s+2} < \frac{t-1}{s} < \frac{3t}{3s+1}$, and we are done. Next assume that $s \ge 2t-2$. Choose $y= s- \lceil\frac{s-2t+3}{3t-2}\rceil$ and $x=t-1$. We will show that $\frac{3t-2}{3s+2} < \frac{t-1}{y} < \frac{3t}{3s+1}$. Since $y= s- \lceil\frac{s-2t+3}{3t-2}\rceil \le s - \frac{s-2t+3}{3t-2} = \frac{ 3st -3s + 2t - 3 }{3t-2} < \frac{(t-1)(3s+2)}{3t-2}$, we have $\frac{3t-2}{3s+2} < \frac{t-1}{y}$. Since $t \ge 3$ and $s \ge 2t-2$, we infer that $\frac{ 3st -3s -t }{3t-2} > \frac{(t-1)(3s+1)}{3t}$. Since $y= s- \lceil\frac{s-2t+3}{3t-2}\rceil \ge s - \frac{s-2t+3 + 3t-3}{3t-2} = \frac{ 3st -3s -t }{3t-2} > \frac{(t-1)(3s+1)}{3t}$, we have $\frac{t-1}{y} < \frac{3t}{3s+1}$, and we are done.

Now assume that $b_0=3t+2$. Let $x=t$ and $y=s$. We infer that $\frac{3t}{3s+2} < \frac{t}{s} < \frac{3t+2}{3s+1}$, and we are done.

\medskip
\noindent {\bf Case 3.} $b \equiv 2 \pmod 3$. Since $p$ is prime, we infer that $b_0 \not\equiv 2 \pmod 3$.  Suppose $b=3s+2$.

\medskip
\noindent {\bf Subcase 3.1.} $b_0 \equiv 0 \pmod 3$. Suppose  $b_0=3t$. Recall that $b_0=3t  \equiv 1 \pmod 2$. Since $c = b+1 < \frac{p-1}{2}= b + \frac{b_0-1}{2}$, we infer that $b_0 > 3$ and thus $t \ge 3$.
If $s < 3t -3 $, then  let $x=t-1$ and $y=s$. We infer that $\frac{3t-2}{3s+3} < \frac{t-1}{s} < \frac{3t}{3s+2}$, and we are done. Next assume that $s \ge 3t-3$. Choose $y= s- \lceil\frac{s-3t+4}{3t-2}\rceil$ and $x=t-1$. We will show that $\frac{3t-2}{3s+3} < \frac{t-1}{y} < \frac{3t}{3s+2}$. Since $y= s- \lceil\frac{s-3t+4}{3t-2}\rceil \le s - \frac{s-3t+4}{3t-2} = \frac{ 3st -3s + 3t - 4 }{3t-2} < \frac{(t-1)(3s+3)}{3t-2}$, we have $\frac{3t-2}{3s+3} < \frac{t-1}{y}$. Since $t \ge 3$ and $s \ge 3t-3$, we infer that $\frac{ 3st -3s -1 }{3t-2} > \frac{(t-1)(3s+2)}{3t}$. Since $y= s- \lceil\frac{s-3t+4}{3t-2}\rceil \ge s - \frac{s-3t+4 + 3t-3}{3t-2} = \frac{ 3st -3s -1 }{3t-2} > \frac{(t-1)(3s+2)}{3t}$, we have $\frac{t-1}{y} < \frac{3t}{3s+2}$, and we are done.

\medskip
\noindent {\bf Subcase 3.2.} $b_0 \equiv 1 \pmod 3$. Suppose  $b_0=3t+1$. Recall that $b_0=3t +1  \equiv 1 \pmod 2$. Hence $t \equiv 0 \pmod 2$.

If $s > 2t  $, then  let  $x=t$ and $y=s$. We infer that $\frac{3t-1}{3s+3} < \frac{t}{s} < \frac{3t+1}{3s+2}$, and we are done.

If $s <  \frac{3t-3}{2}$, then  let  $x=t-1$ and $y=s$. We infer that $\frac{3t-1}{3s+3} < \frac{t-1}{s} < \frac{3t+1}{3s+2}$, and we are done.

Next assume that $\frac{3t-3}{2} \le s \le 2t$.

If $t >5$, then let $x=t-1$ and $y=s-1$. We infer that $\frac{3t-1}{3s+3} < \frac{t-1}{s-1} < \frac{3t+1}{3s+2}$, and we are done. If $t \le 5$, we have $t= 2$ or $4$.

If $t=2$, then $b_0=7$. Since $\frac{3}{2} \le s \le 4$, we have $2 \le s \le 4$. If $s \le 3$, then $b \le 11$ and $p \le 29$, yielding a contradiction to $p \ge 31$. If $s=4$, then $b=14$ and $p=35$, yielding a contradiction to that $p$ is prime.

If $t=4$, then $b_0=13$. Since $\frac{9}{2} \le s \le 8$, we have $5 \le s \le 8$.  If $s=5$, then $b=17$ and $p=47$, so the results follows from Lemma \ref{small p with index 2}. If $s=6$, then $b=20$ and $p=53$, so the results follows from Lemma \ref{small p with index 2}. If $s=7$, then $b=23$ and $p=59$, so the results follows from Lemma \ref{small p with index 2}. If $s=8$, then $b=26$ and $p=65$, yielding a contradiction to that $p$ is prime.
\end{proof}

\medskip

We are now in the position to prove the main theorem.
\medskip

\noindent {\bf Proof of Theorem \ref{main theorem}}

\medskip
We divide the proof according to the following three cases.
\medskip

\noindent {\bf Case 1.} $\lceil \frac{p}{c} \rceil < \lceil \frac{p}{b} \rceil$. Suppose that $\lceil \frac{p}{c} \rceil = m< \frac{p}{b}$. Let $k=1$. Then $ma \le  mb < p$, and we are done.

\medskip
\noindent {\bf Case 2.} $\lceil \frac{p}{c} \rceil = \lceil \frac{p}{b} \rceil$ and $k_1\le \frac{b}{a}$. Suppose $\lceil \frac{k_1p}{c} \rceil = m< \frac{k_1p}{b}$. Let $k=k_1$. Then $ma \le  m \frac{b}{k_1} < p$, and we are done.

\medskip
\noindent {\bf Case 3.} $\lceil \frac{p}{c} \rceil = \lceil \frac{p}{b} \rceil$ and $k_1 > \frac{b}{a}$. Then $k_1 \ge 2$.

If $a-2\ge \frac{b}{k_1}$, then $\frac{(k_1-1)p(a-2)}{bc}> \frac{2(k_1-1)}{k_1} \ge 1$, a contradiction to \eqref{k1 is minimal}. Hence we may assume that $a-2 < \frac{b}{k_1} < a$.

Now assume that $b=k_1\ell+k_0$, where $0 \le k_0 < k_1$. Then $a-2 \le \ell < \ell +1 \le a$.

\medskip
\noindent {\bf Subcase 3.1.} $a=\ell+1$. Then $c=a+b-2=(k_1+1)\ell + k_0-1$.

Suppose $\frac{p}{c} > 3$.  By \eqref{k1 is minimal} we have $1 > \frac{3(\ell-1)(k_1-1)}{k_1\ell+k_0} \ge \frac{3\ell k_1 - 3k_1 - 3\ell +3}{k_1\ell +k_1-1}.$ Hence $2\ell k_1 -3\ell -4k_1 +4 < 0$. This implies that $\ell =2$ or $\ell=3, \ k_1=2$.

If $\ell =2$, then $a=3$. If $\frac{p}{c} > 4$, then by \eqref{k1 is minimal} we have $1 > \frac{4(\ell-1)(k_1-1)}{k_1\ell+k_0} = \frac{4 k_1 - 4}{2k_1 +k_0}.$ Hence $2k_1  - k_0 -4 < 0$ and thus $k_1=2$. Hence $ b= 4$ or $5$. If $b=4$, then $c=5$, so the result follows from Lemma \ref{special case} (3). If $b=5$, then $c=6$, so the result follows from Lemma \ref{special case} (4). Next assume that $3< \frac{p}{c} < 4$. Since $\lceil \frac{p}{c} \rceil = \lceil \frac{p}{b} \rceil$ we have $3 < \frac{p}{c} < \frac{p}{b} < 4$. By \eqref{k1 is minimal} we have $1 > \frac{3(\ell-1)(k_1-1)}{k_1\ell+k_0} = \frac{3 k_1 - 3}{2k_1 +k_0}.$ Hence $k_1  - k_0 -3 < 0$ and thus $k_0=k_1-1$ or $k_1-2$. If $k_0=k_1-1$, then $b=3k_1-1$ and $c=3k_1$, so the result follows from Lemma \ref{special case 1}. If $k_0=k_1-2$, then $b=3k_1-2$ and $c=3k_1-1$, so it follows from Lemma \ref{special case 2} that this case is impossible.

If $\ell=3, \ k_1=2$, then $a=4$ and  $b=6$ or $7$. If $b=6$, then $c=8$, so the result follows from Lemma \ref{special case} (1). If $b=7$, then $c=9$, so the result follows from Lemma \ref{special case} (2).

Suppose that $3 > \frac{p}{c} >2$. Since $\lceil \frac{p}{c} \rceil = \lceil \frac{p}{b} \rceil$ we have $2 < \frac{p}{c} < \frac{p}{b} < 3$. By \eqref{k1 is minimal} we have $1 > \frac{2(\ell-1)(k_1-1)}{k_1\ell+k_0} \ge \frac{2\ell k_1 - 2k_1 - 2\ell +2}{k_1\ell +k_1-1}.$ Hence $\ell k_1 -2\ell -3k_1 +3 < 0$. This implies that $k_1=2$ or $k_1=3, \ \ell \le 5$ or $k_1=4, \ \ell \le 4 $ or $k_1 \ge 5, \ \ell \le 3$.

If $k_1=2$, then $k_0=0$ or $1$. Since $\frac{2p}{c} \le m_1 < \frac{2p}{b}$, we infer that $m_1=5$. If $5a < p$, we are done. Hence we may assume that $p < 5a = 5\ell +5$. Since $p > 2c = 6\ell +2k_0-2$, we have $5\ell +5 > 6\ell +2k_0-2$ and thus $\ell < 7$. Since $p \ge 31$, we infer that $\ell \ge 6$. Hence $a \ge 7$. Since $p < 5\ell +5 < 42$,  by Lemma \ref{small p with index 2} we have $\ind(S)=1$.

If $k_1=3$ and $\ell \le 5$, then $b=k_1 \ell + k_0 \le 17$. Hence $p < 3b \le 51$,  so the result follows from Lemma \ref{small p with index 2}.

If $k_1=4$ and $\ell \le 4$, then $b=k_1 \ell + k_0 \le 19$. Hence $p < 3b \le 57$, so the result follows from Lemma \ref{small p with index 2}.

If $k_1\ge 5$ and $\ell =3$, then $a=4$. By \eqref{k1 is minimal} we have $1 > \frac{2\time 2 \time (k_1-1)}{3k_1+k_0}.$ Hence $k_1  - k_0 -4 < 0$ and thus $k_0=k_1-1$ or $k_1-2$ or $k_1-3$. If $k_0=k_1-1$, then $b= 4k_1-1$ and $c= 4k_1+1$, so  the result follows from Lemma \ref{special case 3}. If $k_0=k_1-2$, then $b= 4k_1-2$ and $c= 4k_1$, yielding a contradiction (by Lemma \ref{special case 4}). If $k_0=k_1-3$, then $b= 4k_1-3$ and $c= 4k_1-1$,  yielding a contradiction (by Lemma \ref{special case 5}).

If $k_1\ge 5$ and $\ell =2$, then $a=3$. Therefore, the result follows from Lemma \ref{special case 6}.

\medskip
\noindent {\bf Subcase 3.2.} $a=\ell+2$. Then $c=a+b-2=(k_1+1)\ell + k_0$.

Suppose $\frac{p}{c} > 3$.  By \eqref{k1 is minimal} we have $1 > \frac{3\ell(k_1-1)}{k_1\ell+k_0} \ge \frac{3\ell k_1  - 3\ell }{k_1\ell +k_1-1}.$ Hence $2\ell k_1 -3\ell -k_1 +1 < 0$, which is impossible since $k_1\ge 2$ and $\ell \ge 1$.

Next assume that $3 > \frac{p}{c} >2$, by \eqref{k1 is minimal} we have $1 > \frac{2\ell(k_1-1)}{k_1\ell+k_0} \ge \frac{2\ell k_1  - 2\ell }{k_1\ell +k_1-1}.$ Hence $\ell k_1 -2\ell -k_1 +1 < 0$. This implies that $k_1=2$ or $\ell =1$.

If $k_1=2$, then $k_0=0$ or $1$. Since $\lceil \frac{p}{c} \rceil = \lceil \frac{p}{b} \rceil$ we have $2 < \frac{p}{c} < \frac{p}{b} < 3$. Since $\frac{2p}{c} \le m_1 < \frac{2p}{b}$, we infer that $m_1=5$. If $5a < p$, we are done. Hence we may assume that $p < 5a = 5\ell +10$. Since $p > 2c = 6\ell +2k_0$, we have $5\ell +10 > 6\ell +2k_0$ and thus $\ell < 10$. Since $p \ge 31$, we infer that $\ell \ge 5$. Hence $a \ge 7$. Since $p < 5\ell +10 < 60$,  by Lemma \ref{small p with index 2} we have $\ind(S)=1$.

If $\ell =1$, then $a=3, \ b= k_1+k_0, \ c= k_1+k_0+1$.  By \eqref{k1 is minimal} we have $1 > \frac{2\ell(k_1-1)}{k_1\ell+k_0} = \frac{2 k_1 - 2}{k_1 +k_0}.$ Hence $k_1  - k_0 -2 < 0$ and thus $k_0=k_1-1$. Then $b=2k_1-1$ and $c=2k_1$. Suppose $p=2b+b_0=4k_1-2+b_0$. Then $b_0$ is odd. Since $c < \frac{p-1}{2}$,  we infer that $3 < b_0 < 2k_1-1$. By \eqref{k1 is minimal} we have $1 > \frac{(k_1-1)(4k_1-2+b_0)}{(2k_1-1)(2k_1)}.$ Hence $b_0k_1  - 4k_1 +2 - b_0 < 0$. If $b_0 \ge 6$, then $0 > b_0(k_1 -1)  - 4k_1 +2 \ge 6k_1-6-4k_1+2 \ge 0$, a contradiction. Hence we must have $b_0 = 5$. Then $0 > b_0(k_1 -1)  - 4k_1 +2 = 5k_1-5-4k_1+2 = k_1-3$ and thus $k_1=2$. Then $p=11$, yielding a contradiction.

This completes the proof. \qed



\end{document}